\newcommand{\C}{\mathcal C}
\newcommand{\X}{\mathcal X}
\newcommand{\Y}{\mathcal Y}
\newcommand{\Z}{\mathcal Z}
\newcommand{\mb}{\mathbb}
\newcommand{\ze}{\mathbb{Z}}
\newcommand{\ol}{\overline}
\newcommand{\SE}{\mathcal{SE}}
\newcommand{\E}{\mathcal E}
\newcommand{\ra}{\rightarrow}
\newcommand{\lra}{\longrightarrow}
\newcommand{\co}{\mathbb C}
\newcommand{\col}{\colon}
\newcommand{\D}{\text{Def}}
\newcommand{\A}{\text{Aut}}
\newcommand{\Sgb}{\overline{S_g}}
\newcommand{\Mgb}{\overline{M_g}}
\renewcommand{\ol}{\overline}
\renewcommand{\phi}{\varphi}
    \newtheorem{Lem}{Lemma}[section]
    \newtheorem{Prop}[Lem]{Proposition}
    \newtheorem{Thm}[Lem]{Theorem}
\theoremstyle{definition}
    \newtheorem{Def}[Lem]{Definition}
    \newtheorem{Exa}[Lem]{Example}
    \newtheorem{Rem}[Lem]{Remark}
    \newtheorem{Not}[Lem]{Notation}
    \DeclareMathOperator{\Pic}{Pic}
\begin{document}

\title{Enriched  spin curves on stable curves with two components}

\author{Marco Pacini}
\thanks{The author was partially supported by CNPq (Proc.151610/2005-3) and by Faperj (Proc.E-26/152-629/2005)\\
M.S.C. (2000): Primary 14H10 Secondary 14K30}

\begin{abstract}
In \cite{M}, Main\`o  constructed a moduli space for enriched stable curves, by blowing-up the moduli space of Deligne-Mumford stable curves. We introduce enriched spin curves, showing that a parameter space for these objects is obtained by blowing-up the moduli space of spin curves.

\medskip

\end{abstract}

\maketitle

\section{Introduction}

A basic tool in the theory of limit linear series is to consider degenerations of smooth curves to singular ones. In \cite{EH}, Eisenbud and Harris developed a theory for curves of compact type, i.e. curves having only separating nodes. The advantage to work with curves of compact type is the following. Let $B$ be the spectrum of a DVR and $f\col\C\ra B$ be a general smoothing of a nodal curve $C$, i.e. $C=f^{-1}(0)$ for some $0\in B$ and $f^{-1}(b)$ is a smooth curve for $b\ne 0$ and $\C$ is smooth. If $C_1\dots,C_\gamma$ are the components of $C$, then all the extensions of a line bundle $\mathcal L^*$ over $f^{-1}(B-0)$ are given by $\mathcal L\otimes\mathcal O_{\C}(C_i)$, where $\mathcal L$ is a fixed extension. If $C$ is of compact type, then $\mathcal L\otimes\mathcal O_{\C}(C_i)$ does not depend on the smoothing. This is not true in general and it is the main difficulty arising when one tries to extend the theory to a more general class of curves. The problem was solved in \cite{EM} for general curves with two components, but a general analysis is still not available.

With these motivations, the notion of \emph{enriched stable curve} is introduced in \cite{M}. Let $C$ be a stable curve with components $C_1\dots, C_\gamma$.  An \emph{enriched stable curve of $C$} is given by $(C, \mathcal O_{\C}(C_1)|_{C},\dots, \mathcal O_{\C}(C_\gamma)|_{C})$, where $f\col\C\ra B$ is a general smoothing of $C$. Necessarily, we have $\otimes_i^\gamma \mathcal O_{\C}(C_i)|_C\simeq\mathcal O_C$. In \cite{M}, it is shown that an enriched stable curve of $C$ only depends on the first order deformation of the given smoothing $\C$ of $C$. Furthermore it is possible to understand when two first order deformations of $C$ give rise to the same enriched stable curve. A moduli space for enriched stable curves is constructed by taking blow-ups of the base of the universal deformation of stable curves and glueing all these blow-ups together.

On the other hand for a given family of nodal curves $f\col\C\ra B$ and a line bundle 
$\mathcal N$ of $\C$ of relative even degree, viewed as a family of line bundles on the fibers of $f$, one can consider the problem of compactifying the moduli space for roots of the restriction of $\mathcal N$ to the fibers of $f$. In \cite{CCC}, a moduli space is constructed in terms of \emph{limit square roots}. In particular, when $f\col\C\ra B$ is a stable family and $\mathcal N=\omega_f$, this moduli space represents \emph{spin curves of stable curves}, a generalization of theta characteristics on smooth curves. 
In \cite{C}, a moduli space $\Sgb$ for spin curves of stable curves of genus $g$ is constructed. The moduli space $\Sgb$ is endowed with a natural finite morphism $\phi\col\Sgb\lra\Mgb$ onto the moduli space of Deligne--Mumford stable curves. As one can expect, the degree of $\phi$ is $2^{2g}$. The fibers of $\phi$ over represent \emph{spin curves}. The paper \cite{CC} provides an explicit combinatorial description of the boundary.

Since a parameter space for enriched curves is obtained by blowing-up $\Mgb$, we expect that a point of a blow-up of $\Sgb$ parametrizes roots of all the possible degenerations of the dualizing sheaf on families of stable curves. Indeed, let $C$ be a stable curve. A curve $X$ is obtained from $C$ by \emph{blowing-up} a subset $\Delta$ of the set of the nodes of $C$ if there is a morphism $\pi\col X\ra C$ such that, for every $p_i\in\Delta,$ $\pi^{-1}(p_i)=E_i\simeq\mb{P}^1$ and $\pi\col X-\cup_i E_i\ra C-\Delta$ is an isomorphism.  The curves $E_i$ are called \emph{exceptional}. Let $C$ be with two smooth components, $C_1,C_2$. We define an \emph{enriched spin curve supported on $X$} as a tern $(X, L_1, L_2)$, where $X$ is a blow-up of $C$ at a \underline{proper} subset of nodes, and $L_1,L_2$ are line bundles of $X$ such that: 
\begin{itemize}
\item[(i)]
$L_i$ has degree one on exceptional components of $X$;
\item[(ii)]
if $\widetilde{X}$ is the complement of the union of the exceptional components of $X$, then  $(L_i|_{\widetilde{X}})^{\otimes 2}\simeq \omega_{\widetilde{X}}\otimes\mathcal O_{\widetilde{\X}}(C_i)|_{\widetilde{X}}$, for $i=1,2$, where $\widetilde{\X}\ra B$ is a general smoothing of $X$, and $(L_1)|_{\widetilde{X}}\otimes (L_2)|_{\widetilde{X}}\simeq\omega_{\widetilde{X}}$. 
\end{itemize}

We introduce a notion of isomorphism between enriched spin curves and we denote by $\ol{\SE_C}$ the set of isomorphism classes of enriched spin curves and by 
$\SE_X$ the subset of the ones supported on $X$.  In Lemma \ref{desing}, we show when $\Sgb$ is singular at a spin curve $\xi$ of a curve $C$ with two smooth components.  A detailed analysis of the singular locus of $\Sgb$ is given in \cite{L}.  We consider a distinguished subset $D_X$ of $\Sgb$ containing $\xi$ as singular point and we find a blow-up $D^\nu_X\ra D_X$, desingularizing $D_X$, with exceptional divisor $\mb{P}^{\delta-1}_\xi$, where $\delta$ is the number of nodes of $C$. The following theorem sums-up Proposition \ref{tors} and Theorem \ref{Th1}, \ref{Th2}.

\begin{Thm}\label{main} 
Let $C$ be with $\delta$ nodes and two smooth components of genus at least 1. Assume that $\text{Aut}(C)=\{id\}.$ 
For $\xi$ running over the set of spin curves of $C$ which are singular points of $\Sgb$, there exist $\delta$ hyperplanes $H_{\xi, 1},\dots, H_{\xi, \delta}$ of $\mb{P}^{\delta-1}_\xi$,  such that:  
\begin{itemize}
\item[(i)]
$\SE_C$  and $\cup_\xi (\mb{P}^\delta_\xi -(\cup_{1\le i\le \delta} H_{\xi, i}))$ are isomorphic torsors;
\item[(ii)] 
 if $X_I$ and $\widetilde{X_I}$ are the blow-up and the normalization of $C$ at a subset $I=\{p_1,\dots, p_h\}$ of nodes of $C$, with $1\le h<\delta$, then the set of the isomorphism classes of enriched spin curves of $C$ supported on $X_I$, $\SE_{\widetilde{X_I}}$ and $\cup_\xi (\cap_{1\le i\le  h} H_{\xi, i}-\cup_{h<i\le \delta} H_{\xi, i})$ are isomorphic torsors.
 \end{itemize}
\end{Thm}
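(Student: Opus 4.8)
The plan is to deduce the statement by combining Proposition \ref{tors} with Theorems \ref{Th1} and \ref{Th2}, the heart of the matter being to identify, for a fixed underlying spin curve $\xi$, the projective space $\mb P^{\delta-1}_\xi$ with the projectivized space of admissible twists. Recall that an enriched structure on $C$ records the line bundles $\mathcal O_{\mathcal X}(C_i)|_C$, whose restriction to each of $C_1,C_2$ is \emph{fixed} (for $T_1=\mathcal O_{\mathcal X}(C_1)|_C$ it is $\mathcal O_{C_1}(-\sum_j p_j)$ on $C_1$ and $\mathcal O_{C_2}(\sum_j p_j)$ on $C_2$), so that the only datum varying with the smoothing is the gluing at the $\delta$ nodes. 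Hence, as the smoothing sweeps out its first-order deformations, the twist $T_1$ ranges over a torsor under $\ker(\Pic(C)\to\Pic(\widetilde C))\cong(\co^*)^{\delta-1}$, where $\widetilde C$ is the normalization. First I would make this torsor structure explicit: since $L_2=\omega_C\otimes L_1^{-1}$ and $L_1$ is a square root of $\omega_C\otimes T_1$, an enriched spin curve is the pair $(T_1,L_1)$, and the assignment $(T_1,L_1)\mapsto(T_1\otimes v^{\otimes 2},L_1\otimes v)$ with $v|_{\widetilde C}$ a $2$-torsion line bundle exhibits $\SE_C$ as a torsor under a group sitting in an extension of $\Pic^0(\widetilde C)[2]$ by $(\co^*)^{\delta-1}$; this is the content of Proposition \ref{tors}. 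The second ingredient is the dictionary $p_i\leftrightarrow H_{\xi,i}$: I would define $H_{\xi,i}$ to be the coordinate hyperplane $\{t_i=0\}$ of $\mb P^{\delta-1}_\xi$ cut out by the degeneration of the gluing parameter at the $i$-th node.

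With these identifications in place, part (i) follows by checking that an enriched spin curve is supported on $C$ itself, with no exceptional components, precisely when every gluing parameter is nonzero. Indeed, blowing up a node forces the corresponding parameter to degenerate, so the locus of enriched spin curves on $C$ is the complement of all coordinate hyperplanes, i.e. the open torus $\mb P^{\delta-1}_\xi-\bigcup_{1\le i\le\delta}H_{\xi,i}\cong(\co^*)^{\delta-1}$. Taking the union over the singular spin curves $\xi$ and verifying that the $(\co^*)^{\delta-1}$-action matches the twist action of Proposition \ref{tors} yields the torsor isomorphism of (i); this is Theorem \ref{Th1}.

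For part (ii) I would analyze the boundary strata. If $X_I$ is the blow-up at $I=\{p_1,\dots,p_h\}$, an enriched spin curve supported on $X_I$ carries exceptional $\mb P^1$'s over exactly the nodes of $I$; by the dictionary above this corresponds to the stratum where $t_i=0$ for $i\le h$ and $t_i\ne 0$ for $i>h$, namely $\bigcap_{1\le i\le h}H_{\xi,i}-\bigcup_{h<i\le\delta}H_{\xi,i}$. Condition (i) on $L_1,L_2$, that they have degree one on each exceptional component, is exactly the normalization making the square-root condition (ii) consistent across the blow-up, and restricting to the partial normalization $\widetilde{X_I}$ reduces the data to an enriched spin curve on the $(\delta-h)$-nodal curve $\widetilde{X_I}$, whose residual twist torsor is $(\co^*)^{\delta-h-1}$. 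This matches the dimension $(\delta-1)-h$ of the stratum, and the resulting torsor isomorphism among $\SE_{X_I}$, $\SE_{\widetilde{X_I}}$ and the stratum is Theorem \ref{Th2}.

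The main obstacle is establishing the geometric dictionary rigorously, namely that the exceptional divisor of the desingularization $D^\nu_X\to D_X$ is canonically the projectivization of the space of twists, and that each coordinate hyperplane records exactly the degeneration ``blow up the node $p_i$''. This requires tracking how a limit of square-root line bundles behaves as a single gluing parameter tends to $0$ and checking that it forces precisely one exceptional component, in the spirit of the limit square roots of \cite{CCC}; one must also verify that the several torsor structures are compatible along the closures of the strata, so that the separate isomorphisms of (i) and (ii) are induced by a single action. The hypotheses $\A(C)=\{id\}$ and $g_i\ge 1$ enter here: the former guarantees that all the sets in question are genuine torsors rather than stack quotients, while the latter, via Lemma \ref{desing}, is what makes each $\xi$ a singular point of $\Sgb$ and hence produces the blow-up $\mb P^{\delta-1}_\xi$ in the first place.
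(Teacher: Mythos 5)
Your top-level reduction---deduce the theorem by combining Proposition \ref{tors} with Theorems \ref{Th1} and \ref{Th2}---is exactly how the paper organizes things, so the skeleton is fine. The problem is the ``geometric dictionary'' that you yourself call the heart of the matter, and it is not merely a gap to be filled: the proposed identification is false. The exceptional divisor $\mb{P}^{\delta-1}_\xi$ is \emph{not} the projectivization of the space of admissible twists. It is the exceptional divisor of the blow-up of $D_X$, and $D_X$ is a degree-$2^{\delta-1}$ cover $\phi\col D_X\ra D_C$ of the twist space: in the coordinates of Lemma \ref{desing} the homogeneous coordinates on $\mb{P}^{\delta-1}_\xi$ are the parameters $t_i$ deforming the spin curve $\xi$, while the deformation parameters of $C$ (whose directions parametrize twisters by Proposition \ref{Maino}) are their squares $t_i^2$. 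The induced map from $\mb{P}^{\delta-1}_\xi$ to the projectivized twist space is $[t_1:\dots:t_\delta]\mapsto[t_1^2:\dots:t_\delta^2]$, of degree $2^{\delta-1}$, and the deck group $(\ze/2\ze)^{\delta-1}$ of Remark \ref{line-torsor} acts nontrivially on its fibers. Consequently the map you describe, sending an enriched spin curve to the pair (associated $\xi$, twist direction), is $2^{\delta-1}$-to-one rather than bijective: once $\xi$ (i.e.\ the isomorphism classes of $L_1|_{C_1}$ and $L_1|_{C_2}$) and the twister $T_1$ are fixed, there remain exactly $2^{\delta-1}$ non-isomorphic gluings $L_1$ with $L_1^{\otimes 2}\simeq\omega_C\otimes T_1$. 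This factor is precisely the $(\ze/2\ze)^{\delta-1}$ appearing in Proposition \ref{tors} and Remark \ref{line-torsor}, which your dictionary collapses; it is also why the group you exhibit (an extension of $J_2(C^\nu)$ by $(\co^*)^{\delta-1}$) is not the group stated in Proposition \ref{tors}.

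The missing idea---which is the actual content of the paper's proofs of Theorems \ref{Th1} and \ref{Th2}---is a mechanism by which a point of $\mb{P}^{\delta-1}_\xi$ remembers the square root itself and not only the twist. The paper gets this by viewing $L_1$ and $L_2$ as limit square roots of $\omega_f(C_1)|_C$ and $\omega_f(C_2)|_C$ along the smoothing $f$, i.e.\ as points $\ell_1,\ell_2$ of the curves $\ol{S}_f(\omega_f(C_1))$, $\ol{S}_f(\omega_f(C_2))$ of \cite[Theorem 2.4.1]{CCC}; these curves are smooth at $\ell_i$ by \cite[4.1]{CCC}, agree with $\ol{S}_f(\omega_f)$ away from the central fiber, hence embed into the common normalization $S^\nu_f$, where one proves $\ell_1=\ell_2$ and that this point lies on the strict transform of the line $R$, so in $\mb{P}^{\delta-1}_\xi-\cup_{1\le i\le\delta}H_{\xi,i}$. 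Bijectivity is then a counting argument ($|\psi^{-1}(\xi)|\le 2^{\delta-1}$ by \cite[Lemma 4.1.1]{CCC}, matched by the $2^{\delta-1}$ gluings above; in Theorem \ref{Th2} the count is $2^{\delta-h-1}$, after the auxiliary blow-ups $Y_i$ and the contraction argument producing $\mathcal G_i$). Nothing in your proposal plays this role: you correctly flag ``tracking how a limit of square-root line bundles behaves'' as the main obstacle, but the statement you propose to establish by that tracking (that $\mb{P}^{\delta-1}_\xi$ is canonically the projectivized twist space, so that nonvanishing of gluing parameters alone pins down the point) is exactly what the tracking refutes. Likewise, in part (ii) the dimension match between the stratum $\cap_{i\in I}H_{\xi,i}-\cup_{i\notin I}H_{\xi,i}$ and $\SE_{\widetilde{X_I}}$ is necessary but nowhere near sufficient; the paper's Second and Third Steps exist precisely to produce the bijection.
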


The proof of the Theorem \ref{main} uses some ideas of \cite{P}.
We see that $\ol{\SE_C}$ is parametrized by a complete variety and that it is stratified in terms of enriched spin curves of partial normalizations of $C$, as illustrated in Example \ref{Example}. 
Furthermore, recall that the moduli space of enriched stable curve constructed in \cite{M} is not complete. The analysis of this paper suggests that a compactification of this moduli space could be given in terms of enriched stable curves on partial normalizations of stable curves.

Although the hypothesis that the components of $C$ are smooth could be removed 
 in Theorem \ref{main}, the combinatorics involved became a bit harder especially in the proof of Theorem \ref{Th2}. Therefore, we choose to present the simplest case in this paper and we plan to investigate the problem of the generalization to any stable curve in a different paper.

\medskip

We will use the following notation and terminology. We work over the field of complex numbers. A \emph{curve} is a connected projective curve which is Gorenstein and reduced. A \emph{stable} (\emph{semistable}) curve $C$ is a nodal curve such that every smooth rational subcurve of $C$ meets the rest of the curve in at least $3$ points ($2$ points). Let $\omega_X$ be the dualizing sheaf of a curve $X$. The genus of $X$ is $g=h^0(X,\omega_X).$ If $Z\subset X$ is a subcurve, set $Z^c:=\overline{X-Z}.$ 
A \emph{family of curves} is a proper and flat morphism $f\col\mathcal W\ra B$ whose fibers are curves. We denote either by $\omega_f$ or by $\omega_{\mathcal W/B}$, the relative dualizing sheaf of a family. A \emph{smoothing} of a curve $X$ is a family $f\col\mathcal X\ra B,$ where $B$ is a smooth, connected, affine curve of finite type, with a distinguished point $0\in B,$ such that $X=f^{-1}(0)$ and $f^{-1}(b)$ is smooth for $b\in B-0.$ A \emph{general smoothing} is a smoothing with smooth total space. 
A curve $X$ is obtained from $C$ by \emph{blowing-up} a subset $\Delta$ of the set of the nodes of $C,$ if there is a morphism $\pi\col X\ra C$ such that, for every $p_i\in\Delta,$ $\pi^{-1}(p_i)=E_i\simeq\mb{P}^1$ and $\pi\col X-\cup_i E_i\ra C-\Delta$ is an isomorphism. For every $p_i\in\Delta,$ we call $E_i$ an \emph{exceptional component}. 
If $X$ is a curve , we denote by $\A(X)$ the group of automorphisms of $X$.

\section{The moduli space of spin curves}

In \cite{CCC}, the authors described compactifications of moduli spaces of roots of line bundles on smooth curves, in terms of \emph{limit square roots}.

Let $C$ be a nodal curve and let $N\in\text{Pic}(C)$ be of even degree.
A tern $(X,L, \alpha),$ where $\pi\col X\ra C$ is a blow-up of $C,$ $L$ is a line bundle on $X$ and $\alpha$ is a homomorphism $\alpha\col L^{\otimes 2}\ra \pi^*(N),$ is a \emph{limit square root} of $(C,N)$ if:
\begin{itemize}
\item[(i)]
the restriction of $L$ to every exceptional component has degree $1;$
\item[(ii)]
the homomorphism $\alpha$ is an isomorphism at the points of $X$ not belonging to an exceptional component;
\item[(iii)]
for every exceptional component $E$ such that $E\cap E^c=\{p,q\}$ the orders of vanishing of $\alpha$ at $p$ and $q$ add up to $2.$ 
\end{itemize}

The curve $X$ is called the \emph{support} of the limit square root. If $C$ is stable, then a limit square root of $(C, \omega_C)$ is said to be a \emph{spin curve of} $C$.

If $X$ is a blow-up of a nodal curve $C$, denote by $\widetilde{X}:=\overline{X-\cup E},$ where $E$ runs over the set of the exceptional components.  There exists a notion of isomorphism of limit square roots. By \cite[Lemma 2.1]{C}, two limit square roots $\xi=(X,L,\alpha)$ and $\xi'=(X,L',\alpha')$ are isomorphic if and only if the restrictions of $L$ and $L'$ to $\widetilde{X}$ are isomorphic. Denote by $\A(\xi)$ the group of automorphisms of $\xi$. A limit square root of $(C, N)$ supported on a blow-up $X$ with exceptional components $\{E_i\}$ is determined by the line bundle $L$ obtained by glueing  $\mathcal O_{E_i}(1)$, for every $E_i$, and a square root of $(\pi^*N)|_{\widetilde{X}}(\sum(- p_i-q_i))$, where $\{p_i, q_i\}=E_i\cap E_i^c$. Indeed, it is possible to define a homomorphism $\alpha$ such that $(X, L, \alpha)$ is a limit square root. In the sequel, if no confusion may arise, we denote a limit square root simply by $(X, L)$. Let $f\col\mathcal C\ra B$ be a family of nodal curves over a quasi-projective scheme $B$ and let $\mathcal N\in\Pic(\mathcal C)$ be of even relative degree. There exists a quasi-projective scheme $\overline{S}_f(\mathcal N),$ finite over $B$, which is a coarse moduli space, with respect to a suitable functor, of isomorphism classes of limit square roots of the restriction of $\mathcal N$ to the fibers of $f$. For more details, we refer to \cite[Theorem 2.4.1]{CCC}.

Let $C$ be a nodal curve and $N\in\Pic (C)$ of even degree. Denote by $\overline{S}_C(N)$ the zero-dimensional scheme $\overline{S}_{f_C}(N),$ where $f_C\col C\ra\{pt\}$ is the trivial family. In particular, $\overline{S}_C(N)$ is in bijection with the isomorphism classes of limit square roots of $(C, N)$. If $f\col\mathcal C\ra B$ is a family of curves and $\mathcal N\in\Pic\mathcal C,$ then the fiber of $\overline{S}_f(\mathcal N)\ra B$ over $b\in B$ is $\overline{S}_{f^{-1}(b)}(\mathcal N|_{f^{-1}(b)}),$ as explained in \cite[Remark 2.4.3]{CCC}. 
Denote by $\Sigma_X$ the graph having the connected components of $\widetilde{X}$ as vertices and the exceptional components as edges.
By \cite[4.1]{CCC}, the multiplicity of $\overline{S}_C(N)$ in $\xi=(X,G,\alpha)$ is $2^{b_1(\Sigma_X)}$.

In \cite{C}, the author constructed the moduli space $\Sgb$ of spin curves of stable curves of genus $g$. The moduli space $\Sgb$ is endowed with a finite map $\phi\col\Sgb\ra \Mgb$, of degree $2^{2g}$. Let $\overline{M_g^0}$ be the open subset parametrizing curves without non-trivial automorphisms and let $\overline{S_g^0}$ be the restriction of $\Sgb$ over $\overline{M_g^0}$. In this case, if $f\col\mathcal C\ra B$ is a family of stable curves with moduli morphism $B\ra\overline{M_g^0}$, then $\overline{S}_f(\omega_f)=\Sgb\times_{\Mgb} B$.

\begin{Not}\label{DX}
Let $C$ be a stable curve with $\A(C)=\{id\}$ and nodes $p_1,\dots, p_\delta$.  Let $\D(C)$ be the base of the universal deformation of $C$, which is a $(3g-3)$-dimensional polydisc in $\co^{3g-3}_{t_1,\dots,t_{3g-3}}$. Here 
$\{t_i=0\}$ is the locus where the node $p_i$ is preserved. In particular, locally analytically at $C$, we have $\D(C)\subset \Mgb$. Denote by $D_C=\D(C)\cap\co^\delta_{t_1,\dots, t_\delta}$ and by $D_X=\phi^{-1}(D_C)$, where $\phi\col\Sgb\ra\Mgb$.
\end{Not}

\begin{Lem}\label{desing}
Let $C$ be a stable curve with two smooth components and $\delta$ nodes with $\A(C)=\{id\}$. Let $\xi$ be a spin curve of $C$. Then, $\Sgb$ is singular at $\xi$ if and only if $\xi$ is supported on the blow-up at the whole set of nodes of $C$. In this case, locally analytically at $\xi$, the equations of $D_X$ are of type: 
\begin{equation}\label{spin-equat}
w_{ii}w_{jj}=w_{ij}^2,\,\,\,w_{ii}w_{jj}w_{kk}=w_{ij}w_{jk}w_{ik}, \text{ for }1\le i<j<k\le \delta.
\end{equation}
The blow-up $D^\nu_X$ of $D_X$ at the ideal $(w_{11},w_{12},w_{13},\dots,w_{1\delta})$ is smooth.
\end{Lem}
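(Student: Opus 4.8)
The plan is to reduce the whole statement to the explicit local geometry of $D_X$, to read off the equations \eqref{spin-equat} from the structure of limit square roots over the node-smoothing directions, and then to resolve the resulting cone by a chart computation. First I would fix $\xi=(X,L)$ with $X$ the blow-up of $C$ at a set $\Delta$ of $h$ nodes, and compute $\Sigma_X$. If $h<\delta$, then $\widetilde X$ is connected (it is $C_1\cup C_2$, still glued at the $\delta-h\ge 1$ surviving nodes), so $\Sigma_X$ has one vertex and $h$ loops and $b_1(\Sigma_X)=h$; if $h=\delta$, then $\widetilde X=C_1\sqcup C_2$ is disconnected, $\Sigma_X$ has two vertices and $\delta$ edges, and $b_1(\Sigma_X)=\delta-1$. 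By the multiplicity formula $2^{b_1(\Sigma_X)}$, $\xi$ occurs in $\overline S_C(\omega_C)$ with multiplicity $2^h$, resp. $2^{\delta-1}$. I would then isolate the node-smoothing directions: the parameters $t_{\delta+1},\dots,t_{3g-3}$ are equisingular, and along the corresponding stratum the theta characteristics of the smooth components $C_1,C_2$ deform as an \'etale cover, so $\phi$ is \'etale there. Hence, locally analytically at $\xi$, one has $\Sgb\cong D_X\times\co^{\,3g-3-\delta}$, and $\Sgb$ is singular at $\xi$ if and only if $D_X$ is; everything now lives over $D_C=\co^\delta_{t_1,\dots,t_\delta}$.

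The core step is the description of $D_X$, i.e. of $\overline S_f(\omega_f)$ over $D_C$ near $\xi$. Locally at the $i$-th node the smoothing is $xy=t_i$, and the square-root nature of a limit square root forces a coordinate $w_i$ with $w_i^2=t_i$ at every blown-up node. When $h<\delta$ these relations are independent — the connectedness of $\widetilde X$ leaves no residual sign — so $D_X$ is locally $\{w_i^2=t_i\}_{i\in\Delta}\cong\co^\delta$, which is smooth; this yields the ``only if'' direction. When $h=\delta$ the two components are separated and $L|_{C_1},L|_{C_2}$ are genuine theta characteristics, but now the simultaneous negation $s_i\mapsto -s_i$ of all the square-root coordinates, arising precisely because $\widetilde X$ is disconnected, must be quotiented out. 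Writing $s_i$ for the would-be square roots, only the invariants $w_{ij}:=s_is_j$ ($1\le i\le j\le\delta$, with $t_i=w_{ii}$) descend to $D_X$, so $D_X$ is the image of $\co^\delta_s\to\co^{\binom{\delta+1}{2}}_w$, $s\mapsto(s_is_j)$, whose defining relations are the quadrics and cubics \eqref{spin-equat}. I expect the main obstacle to be exactly this identification — tracking how the single global sign produces precisely the cone \eqref{spin-equat}, rather than the unconstrained $2^\delta$-cover or the full set of Veronese minors — and I would carry it out by a direct analysis of the transition functions of $L$ over $D_C$ in the spirit of \cite{P} and \cite{CCC}. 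As a check, the finite map $D_X\to D_C$, $(w_{ij})\mapsto(w_{ii})$, then has degree $2^{\delta-1}$, matching the multiplicity $2^{b_1(\Sigma_X)}$.

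With \eqref{spin-equat} in hand the singularity statement is immediate: every generator lies in $\mathfrak m_\xi^2$, so the Zariski tangent space of $D_X$ at $\xi$ has dimension $\binom{\delta+1}{2}>\delta=\dim D_X$ for $\delta\ge 2$, whence $\xi$ is singular. Finally, for the resolution I would blow up the ideal $(w_{11},w_{12},\dots,w_{1\delta})$ and work on the $\delta$ standard charts $U_k$ of $D_X\times\mb{P}^{\delta-1}$, given by $y_k=1$ and $w_{1i}=w_{1k}y_i$. Using the quadrics to solve $w_{jj}=y_j^2w_{kk}$ and the cubics on the index triple $\{1,i,j\}$ to solve $w_{ij}=y_iy_jw_{kk}$, one finds that on $U_k$ every coordinate is $w_{ij}=y_iy_jw_{kk}$ (with $y_k=1$); one checks directly that these expressions satisfy \eqref{spin-equat} identically, so $U_k\cong\co^\delta$ with free coordinates $w_{kk},y_1,\dots,\widehat{y_k},\dots,y_\delta$. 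Since $U_1,\dots,U_\delta$ cover the blow-up, $D^\nu_X$ is smooth.
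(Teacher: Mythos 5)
Your proposal is correct and follows essentially the same route as the paper: the local description of $\Sgb$ at $\xi$ as a square-root cover of $D_C$ modulo a sign involution (trivial exactly when $h<\delta$, i.e.\ when $\widetilde X$ is connected), the invariants $w_{ij}=s_is_j$ producing the cone \eqref{spin-equat} when $h=\delta$, and the same chart-by-chart computation showing the blow-up is smooth. The only real difference is that where you promise a direct transition-function analysis for the key identification of the sign quotient, the paper settles it by quoting \cite[Lemma 2.3.2, Lemma 3.3.1]{CCC} to compute $\A(\xi)$ and using $D_X=D_C/\A(\xi)$; your explicit tangent-space argument for singularity (valid for $\delta\ge 2$) and the $2^{b_1(\Sigma_X)}$ multiplicity cross-check are harmless additions.
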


\begin{proof}
Keep Notation \ref{DX}. Let $\xi$ be a spin curve of $C$ supported on the blow-up $X$ of $C$ at  the nodes $p_1,\dots, p_h$. Let $\rho\col D_C\ra D_C$ be given by: 
$$\rho(t_1,\dots,t_h,t_{h+1},\dots,t_\delta)=(t_1^2,\dots,t_h^2,t_{h+1},\dots,t_\delta).$$
We have that $\A(\xi)$ acts on $D_C$ as subgroup of the group of automorphisms of $D_C$, commuting with $\rho$, as follows. If $h<\delta$, then 
$\Sigma_X$ is a graph with one node and $h$ loops. Thus $\A(\xi)=\{id\}$ by 
\cite[Lemma 2.3.2, Lemma 3.3.1]{CCC} and hence $\Sgb$ is smooth at $\xi$.
If $h=\delta$, then $\Sigma_X$ is a graph with two nodes and $h$ edges. Again by \cite[Lemma 2.3.2, Lemma 3.3.1]{CCC}, we have: $$\A(\xi)=\{id, (t_1,\dots, t_\delta)\stackrel{\beta}{\ra} (-t_1,\dots, -t_\delta)\}.$$ By definition, $D_X=D_C/\A(\xi)$. If we set $w_{ij}=t_i t_j$ for $1\le i\le j\le \delta$, then locally analytically at $\xi$, the equations of $D_X$ are as in (\ref{spin-equat}). Now, $\Sgb$ is given by $D_X\times (\D(C)\cap\co^{3g-\delta-3}_{t_{\delta+1},\dots,t_{3g-3}})$ and $\Sgb$ is singular at $\xi$. 

Let $D^\nu_X$ be the blow-up of $D_X$ at the ideal $(w_{11},w_{12},\dots,w_{1\delta})$. Cover $D^\nu_X$ with $\delta$ open subsets $U_1,U_2,\dots, U_\delta$, such that the equation of $U_s$ is: 

$$\begin{cases}
\begin{array}{ll}
w_{1i}=\alpha_{is} w_{1s} & 1\le i\le \delta \text{ for } i\ne s \\
%w_j=\alpha_j^2 w_1 & 2\le j\le\delta \\
%w_{jk}=\alpha_j\alpha_k\omega_1 & 2\le j,k\le \delta\\
w_{ii}w_{jj}=w_{ij}^2 & 1\le i<j\le \delta \\
w_{ii}w_{jj}w_{kk}=w_{ij}w_{jk}w_{ik} & 1\le i<j<k\le \delta
\end{array}
\end{cases}$$ 

for every $s=1,\dots,\delta$. After few calculations, we get:

$$\begin{cases}
\begin{array}{ll}
w_{is}=\alpha_{is} w_{ss} & 1\le i< s \\
w_{si}=\alpha_{is} w_{ss} & s< i\le\delta \\
%w_{ii}=\alpha_{is}^2 w_{ss} & 1\le i\le \delta \text{ for } i\ne s\\
w_{ij}=\alpha_{is}\alpha_{js} w_{ss} & 1\le i\le j\le \delta \text{ for } i,j\ne s \\
%^w_{ii}w_{jj}w_{kk}=w_{ij}w_{jk}w_{ik} & 1\le i<j<k\le \delta
\end{array}
\end{cases}$$ 

In particular, $U_s$ is smooth for every $s$, hence $D^\nu_X$ is smooth.  
\end{proof}

\begin{Rem}\label{line-torsor} 
Keep the notation of Lemma \ref{desing}, with $D_X$ singular. Consider the map $\phi\col D_X\ra D_C$.
%, is: $$(w_{11},\dots,w_{1\delta},w_{12},\dots, w_{ij},\dots, w_{\delta\delta})\stackrel{\phi}{\lra}(w_{11}, w_{22}\dots, w_{\delta\delta})$$
 Of course, $\phi$ is a finite map of degree $2^{\delta-1}$, ramified over the coordinate hyperplanes of $D_C$. Let $R\subset D_C$ be a line away from the coordinate hyperplanes and containing the origin. By construction, $\phi^{-1}(R)$ is a union of $2^{\delta-1}$ lines of $D_X$ through the origin, intersecting transversally. The group  of the automorphisms of $D_X$ commuting with $\phi$ is isomorphic to 
 $(\ze/2\ze)^{\delta-1}$ and acts freely and transitively on the set of $2^{\delta-1}$ lines.  Let $\nu\col D_X^\nu\ra D_X$ be as in Lemma \ref{desing} and let $\mb{P}_\xi^{\delta-1}=\nu^{-1}(0)$ be the exceptional divisor over the origin. The pull-back to $D_X^\nu$ of a line of $D_C$ is a disjoint union of lines, intersecting $\mb{P}^{\delta-1}_\xi$. Let $H_{\xi, i}\subset\mb{P}_\xi^{\delta-1}$, for $i=1,\dots,\delta$, be the hyperplane such that the pull-back to $D^\nu_X$ of a line contained in $\{t_i=0\}\subset D_C$ intersects $\mb{P}^{\delta-1}_\xi$ in $H_{\xi, i}$. We see that $\mb{P}_\xi^{\delta-1}-\cup_{1\le i\le\delta} H_{\xi,i}$ is a $(\ze/2\ze)^{\delta-1}\times(\co^*)^{\delta-1}$-torsor. Similarly, for every $\emptyset\ne I\subsetneq\{1,\dots,\delta\}$, we have that $\cap_{i\in I} H_{\xi,i}-\cup_{i\notin I} H_{\xi,i}$ is a $(\ze/2\ze)^{\delta-|I|-1}\times(\co^*)^{\delta-|I|-1}$-torsor.
\end{Rem}

\section{Enriched spin curves}\label{3}

In \cite{M},  an \emph{enriched stable curve} of a stable curve $C$ with irreducible components $C_1,\dots, C_\gamma$ is defined as $(C, T_{C_1},\dots, T_{C_\gamma})$, where $T_{C_i}=\mathcal O_{\C}(C_i)|_C$ and $\C$ is a general smoothing of $C$.  The line bundle $T_{C_i}$ is called a \emph{twister induced by  $C_i$ and $\C$}. Let  $\E_C$ be the set of the enriched stable curves of $C$. Let $D_C$ be as in Notation \ref{DX}.  In the following Lemma, we see that one can obtain a parameter space for $\E_C$, by taking a blow-up of $D_C$

\begin{Prop}\label{Maino}
Let $C$ be a stable curve with $\delta$ nodes and two smooth components $C_1$ and $C_2$. Then $\E_C$ forms a $(\co^*)^{\delta-1}$-torsor, which is isomorphic to the $(\co^*)^{\delta-1}$-torsor of linear directions in $D_C$ through the origin, away from the coordinate hyperplanes. The enriched curve  corresponding to a line $R\subset D_C$ is $(C, T_{C_1}, T_{C_2})$, where 
$T_{C_1}$ (resp. $T_{C_2}$) is the twister induced by $C_1$ (resp. $C_2$) and any general smoothing $\C\ra B$ of $C$ such that, up to restrict $B$, the induced map $B\ra\D(C)$ has $R$ as image.
\end{Prop}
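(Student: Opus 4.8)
The plan is to realise both $\E_C$ and the space of admissible directions as torsors under the \emph{same} group $(\co^*)^{\delta-1}$ and to write down an equivariant map between them; since any equivariant map of torsors is automatically an isomorphism, this will prove the proposition. First I would reduce an enriched curve to a single line bundle. Because $C$ has two components, $\mathcal O_{\C}(C_1)\otimes\mathcal O_{\C}(C_2)=\mathcal O_{\C}(C_1+C_2)=\mathcal O_{\C}(f^{-1}(0))=f^*\mathcal O_B(0)$, whose restriction to $C$ is canonically trivial; hence $T_{C_1}\otimes T_{C_2}\simeq\mathcal O_C$ and the datum $(C,T_{C_1},T_{C_2})$ is equivalent to that of $T_{C_1}$ alone. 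An intersection computation on the smooth surface $\C$ (using $C_1\cdot(C_1+C_2)=0$ and $C_1\cdot C_2=\delta$) gives $T_{C_1}|_{C_1}=\mathcal O_{C_1}(-\sum_i p_i)$ and $T_{C_1}|_{C_2}=\mathcal O_{C_2}(\sum_i p_i)$, so $T_{C_1}$ has fixed multidegree $(-\delta,\delta)$. The normalization of $C$ at its $\delta$ nodes yields the exact sequence $0\to(\co^*)^{\delta-1}\to\Pic(C)\to\Pic(C_1)\times\Pic(C_2)\to 0$, so the set of line bundles of multidegree $(-\delta,\delta)$ is a torsor under the group $(\co^*)^{\delta-1}$ of multidegree-$(0,0)$ bundles, i.e. under change of the $\delta$ node-gluings modulo one overall scaling. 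Thus $\E_C$ sits inside this torsor, and the task is to pin down exactly which gluings occur.

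Next I would construct the comparison map. By \cite{M} the twister $\mathcal O_{\C}(C_i)|_C$ depends only on the first-order part of the smoothing, that is on the image line $R$ of the induced map $B\to\D(C)$; moreover it is insensitive to the equisingular directions, so it depends only on the projection of $R$ to $D_C=\D(C)\cap\co^\delta_{t_1,\dots,t_\delta}$. A smoothing is general exactly when its total space is smooth, which happens iff every node-smoothing parameter is nonzero, i.e. iff $R$ avoids the coordinate hyperplanes. This produces a well-defined map $\Phi$ from the set of lines in $D_C$ through the origin, away from the coordinate hyperplanes, to $\E_C$.

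The heart of the argument, and the step I expect to be most delicate, is a local gluing computation at each node. Near $p_i$ the total space of a smoothing with direction $[c_1:\cdots:c_\delta]=R$ is analytically $\{xy=c_iu\}$, where $u$ is the base parameter and $x,y$ restrict to fixed local parameters $\xi_i,\eta_i$ on the branches lying on $C_1,C_2$. There $\mathcal O_{\C}(C_1),\mathcal O_{\C}(C_2),\mathcal O_{\C}(C)$ are generated by $y^{-1},x^{-1},u^{-1}$, and the equation $xy=c_iu$ gives the single identity
\[ x^{-1}y^{-1}=c_i^{-1}\,u^{-1}. \]
Restricting to $C$ and using the canonical trivialization $u^{-1}|_C$ of $\mathcal O_{\C}(C)|_C=\mathcal O_C$ (which is global and independent of $i$), one reads off that, in the intrinsic frames $\xi_i,\eta_i$, the gluing of $T_{C_1}$ at $p_i$ equals $c_i$ up to the overall normalization. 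Hence $\Phi$ sends the direction $[c_1:\cdots:c_\delta]$ to the line bundle whose $\delta$ node-gluings are $[c_1:\cdots:c_\delta]$.

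Finally I would match the two torsor structures. Rescaling $c_i\mapsto\lambda_ic_i$ moves $R$ by the torus $(\co^*)^\delta/\co^*_{\mathrm{diag}}=(\co^*)^{\delta-1}$, while the identical rescaling of the node-gluings is exactly the action of the degree-$(0,0)$ group $(\co^*)^{\delta-1}$ on bundles of multidegree $(-\delta,\delta)$; the residual common-scale ambiguity matches as well (reparametrizing $B$ on one side, the automorphism $\co^*_{\mathrm{diag}}$ of $\Pic(C)$ on the other). Therefore $\Phi$ is equivariant, and as a map of $(\co^*)^{\delta-1}$-torsors it is an isomorphism; in particular $\E_C$ is a $(\co^*)^{\delta-1}$-torsor isomorphic to the torsor of directions, with $R\mapsto(C,T_{C_1},T_{C_2})$ realised by any general smoothing whose image is $R$, as claimed. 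The principal obstacles I anticipate are the careful bookkeeping of frames and the sign/inverse conventions in the local computation, together with the precise appeal to \cite{M} needed to reduce the dependence of the twister to the projected direction $R\in D_C$.
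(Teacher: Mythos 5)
The paper itself does not prove this proposition: it is quoted from Main\`o's thesis, with the proof deferred to \cite[Proposition 3.4, 3.9]{M}. So there is no internal argument to compare yours against; what you have written is in effect a reconstruction of Main\`o's proof. The route you take --- reduce the pair $(T_{C_1},T_{C_2})$ to $T_{C_1}$ alone via $T_{C_1}\otimes T_{C_2}\simeq\mathcal O_C$, compute the restrictions of $T_{C_1}$ to the components, realize the candidate bundles as a torsor under the gluing torus coming from the normalization sequence, match directions $[c_1:\cdots:c_\delta]$ to gluing constants by the local computation on $\{xy=c_iu\}$, and conclude by equivariance of a map of torsors --- is the natural one, is consistent with the characterization the paper records as Proposition \ref{enr-char}, and is essentially sound.

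Two caveats. First, your phrase ``the group $(\co^*)^{\delta-1}$ of multidegree-$(0,0)$ bundles'' is wrong as stated: bundles of multidegree $(0,0)$ form $\Pic^0(C)$, an extension of $\Pic^0(C_1)\times\Pic^0(C_2)$ by the gluing torus, which is strictly larger than $(\co^*)^{\delta-1}$ whenever the components have positive genus. What you actually need --- and in fact computed --- is that every twister has the \emph{fixed restrictions} $\mathcal O_{C_1}(-\sum_i p_i)$ and $\mathcal O_{C_2}(\sum_i p_i)$, not merely fixed multidegree; the set of line bundles with these fixed restrictions is a torsor under $\ker\bigl(\Pic(C)\to\Pic(C_1)\times\Pic(C_2)\bigr)\simeq(\co^*)^{\delta-1}$, and it is this torsor that $\E_C$ sits in. The fix is purely a matter of rewording, but without it the torsor claim fails. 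Second, the well-definedness of your map $\Phi$ --- that the twister depends only on the line $R$ and not on which general smoothing with image $R$ one chooses --- is precisely the hard content of \cite{M}, and you cite it rather than prove it; so your argument is not independent of \cite{M} at its crux, any more than the paper's citation is. Note, though, that your own local computation nearly supplies this: for an arbitrary general smoothing with tangent direction $[c_1:\cdots:c_\delta]$ the local equation is $xy=c_iu(1+O(u))$, and restriction to $u=0$ kills the higher-order terms; what then remains is exactly the frame bookkeeping you flag at the end, which is where the genuine work in \cite{M} lies.
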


For a proof of Proposition \ref{Maino}, see \cite[Proposition 3.4, 3.9]{M}.  
We will need the following result, characterizing the tuples of line bundles which are twisters.

\begin{Prop}\label{enr-char}
Let $C$ be a stable curve with irreducible components $C_1,\dots, C_\gamma$ 
and let $T_1,\dots,T_\gamma$ be line bundles on  $C$. Then $(C, T_1,\dots, T_\gamma)$ is an enriched stable curves of $C$ if and only if the following conditions are satisfied:
\begin{itemize}
\item[(i)]
$T_i\otimes\mathcal O_{C_i}\simeq \mathcal O_{C_i}(-p_{i,1}-\dots -p_{i,n_i})$ and  $T_i\otimes\mathcal O_{C^c_i}\simeq \mathcal O_{C^c_i}(p_{i,1}+\dots +p_{i,n_i})$ for every $i=1,\dots,\gamma$, where $\{p_{i,1},\dots,p_{i,n_i}\}=C_i\cap C^c_i$. 
\item[(ii)]
$\otimes_{i=1}^\gamma T_i\simeq\mathcal O_C$.
\end{itemize}
\end{Prop}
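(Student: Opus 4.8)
The plan is to characterize twisters via the defining property of enriched stable curves — that $T_i = \mathcal{O}_{\mathcal{C}}(C_i)|_C$ for some general smoothing $\mathcal{C} \to B$ — and to show this is equivalent to conditions (i) and (ii). The forward direction (necessity) should follow by direct computation. For the reverse direction, the idea is to produce an explicit enriched structure realizing any tuple $(T_1,\dots,T_\gamma)$ satisfying (i) and (ii), using the torsor description from Proposition \ref{Maino} in the two-component case and a degree/multidegree bookkeeping argument in general.

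Let me work through the necessity first. Suppose $(C,T_1,\dots,T_\gamma)$ is an enriched stable curve, so $T_i=\mathcal{O}_{\mathcal{C}}(C_i)|_C$ for a general smoothing $f\col \mathcal{C}\to B$.

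*Step 1 (Condition (ii)).* Since $f^{-1}(0)=C=\sum_i C_i$ as a divisor on the smooth surface $\mathcal{C}$, and since $0\in B$ is a single point, we have $\mathcal{O}_{\mathcal{C}}\big(\sum_i C_i\big)=\mathcal{O}_{\mathcal{C}}(f^{-1}(0))\simeq f^*\mathcal{O}_B(0)$. Restricting to $C$ and noting that $f^*\mathcal{O}_B(0)|_C$ is trivial (as $C$ is the fiber over $0$, the pullback of a line bundle on $B$ is constant along fibers), we get $\otimes_i T_i \simeq \mathcal{O}_C$.

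*Step 2 (Condition (i)).* Fix $i$. On the smooth surface $\mathcal{C}$, the components $C_i$ and $C_i^c=\overline{C-C_i}$ meet transversally at the nodes $p_{i,1},\dots,p_{i,n_i}=C_i\cap C_i^c$. I compute the two restrictions separately. Restricting $\mathcal{O}_{\mathcal{C}}(C_i)$ to $C_i$ gives the normal bundle $\mathcal{O}_{\mathcal{C}}(C_i)|_{C_i}=N_{C_i/\mathcal{C}}$. Because the total space of the smoothing is smooth and $C_i+C_i^c$ is a fiber, the adjunction/self-intersection computation yields $N_{C_i/\mathcal{C}}\simeq\mathcal{O}_{C_i}(-C_i^c\cap C_i)=\mathcal{O}_{C_i}(-p_{i,1}-\dots-p_{i,n_i})$, since $(C_i\cdot C_i)_{\mathcal{C}}=-(C_i\cdot C_i^c)_{\mathcal{C}}=-n_i$ using $C_i\cdot(C_i+C_i^c)=C_i\cdot f^{-1}(0)=0$. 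Dually, restricting $\mathcal{O}_{\mathcal{C}}(C_i)$ to $C_i^c$ gives $\mathcal{O}_{C_i^c}(C_i\cap C_i^c)=\mathcal{O}_{C_i^c}(p_{i,1}+\dots+p_{i,n_i})$. Writing these out as $T_i\otimes\mathcal{O}_{C_i}$ and $T_i\otimes\mathcal{O}_{C_i^c}$ gives exactly (i).

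**The converse** (sufficiency) is the harder direction, and it is where I expect the main obstacle. The issue is that conditions (i) and (ii) pin down the multidegree of each $T_i$ on each component and fix the isomorphism class of each restriction, but an enriched structure is an honest gluing datum along the nodes, carrying strictly more information: the torsor in Proposition \ref{Maino} shows the space of enriched structures is $(\co^*)^{\delta-1}$-dimensional. So I cannot expect (i)+(ii) alone to recover a single enriched curve — rather, I must show that \emph{every} tuple satisfying (i)+(ii) arises from \emph{some} smoothing. The cleanest route is to reduce to the two-component case handled in Proposition \ref{Maino}: there one knows $\E_C$ is a $(\co^*)^{\delta-1}$-torsor, so I would verify that the set of line-bundle pairs $(T_1,T_2)$ satisfying (i)+(ii) is likewise a $(\co^*)^{\delta-1}$-torsor (under tensoring by line bundles trivial on each component, i.e. by $\Pic$ of the dual graph's cycle space), and that the map sending an enriched curve to $(T_1,\dots,T_\gamma)$ is a torsor morphism, hence an isomorphism once injectivity or a single base point is checked. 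Concretely, a line bundle with trivial restriction to each component is determined by a gluing parameter in $(\co^*)^{\delta}$ modulo the $(\co^*)$ coming from global scaling, and condition (ii) cuts this down by one more, matching the torsor dimension $\delta-1$; I would make this count precise and exhibit at least one valid smoothing to anchor the torsor, invoking Proposition \ref{Maino} to supply it.

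The main obstacle, then, is not the necessity computation but establishing the bijection in the converse without circularity: I must show the abstract combinatorial conditions (i)+(ii) carve out exactly the image of the smoothing construction, which requires matching the torsor structures on both sides rather than constructing an inverse map by hand. The self-intersection computation in Step 2 is routine surface geometry, but it is the technical heart of necessity and must be stated carefully to handle the case of non-smooth components in the general $\gamma$ statement; since our eventual application is to two smooth components, I would, if needed, restrict attention there and cite \cite{M} for the general characterization.
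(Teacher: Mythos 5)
First, a point of reference: the paper contains no proof of this proposition at all. Immediately after the statement it says ``For a proof of Proposition \ref{enr-char}, see \cite[Proposition 3.16]{M} or \cite[Theorem 6.10]{EM}.'' So there is no internal argument to compare against; your proposal has to stand on its own, and it should be judged as an attempted reconstruction of the cited results.

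Your necessity direction is correct and essentially complete. The two key facts --- that $\mathcal O_{\C}(C_1+\dots+C_\gamma)=\mathcal O_{\C}(f^{-1}(0))$ restricts trivially to $C$, and that $\mathcal O_{\C}(C_i)|_{C_i}\simeq\mathcal O_{\C}(-C_i^c)|_{C_i}\simeq\mathcal O_{C_i}(-p_{i,1}-\dots-p_{i,n_i})$ because $C_i$ and $C_i^c$ meet transversally on the smooth surface $\C$ --- are exactly the right ones, and they hold for any $\gamma$ and for singular components, so no restriction is needed in this direction. One stylistic caveat: keep the argument at the level of line-bundle isomorphisms (as in your identity $N_{C_i/\C}\simeq\mathcal O_{C_i}(-C_i\cap C_i^c)$), not intersection numbers; the computation $(C_i\cdot C_i)=-n_i$ pins down only the degree, whereas condition (i) is a statement about isomorphism classes.

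The converse is where the genuine gap lies, on two counts. First, scope: the proposition concerns an arbitrary stable curve with $\gamma$ irreducible components, possibly singular, whereas Proposition \ref{Maino} --- the only tool in your plan --- is stated for two smooth components. Your fallback, ``cite \cite{M} for the general characterization,'' collapses the proposal into exactly what the paper itself does, namely citation, so it cannot count as a proof of the stated result. Second, even in the two-component case the torsor-matching scheme is not yet an argument. You are right that both $\E_C$ and the set of tuples satisfying (i) and (ii) are $(\co^*)^{\delta-1}$-torsors (though your count is garbled: the gluing data modulo rescaling on the two components already gives $(\co^*)^{\delta-1}$, and condition (ii) does not cut this down further --- it merely eliminates $T_2$ as an independent variable). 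And you are right that an equivariant map of torsors under the same group is automatically bijective; indeed no separate injectivity or base-point check is needed, contrary to what you write. But equivariance itself is precisely the missing content: one must show that scaling the direction $R=(\alpha_1t,\dots,\alpha_\delta t)$ by an element of $(\co^*)^{\delta-1}$ rescales the gluing constants of $\mathcal O_{\C}(C_i)|_C$ at the nodes by the same element. This requires writing $\C$ locally at the node $p_j$ as $xy=\alpha_j t$ (to first order) and tracking a local generator of $\mathcal O_{\C}(C_i)$ across the node to read off the gluing; that local computation is the actual bridge between first-order smoothing data and twisters, it is the heart of \cite[Propositions 3.4, 3.9, 3.16]{M}, and nothing in your proposal performs it. Constructing a smoothing realizing a prescribed tuple (your alternative route to surjectivity) is the same computation in different clothing, so it cannot be avoided either way.
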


For a proof of Proposition \ref{enr-char}, see \cite[Proposition 3.16]{M} or \cite[Theorem 6.10]{EM}. Similarly, we introduce enriched spin curves, showing that a parameter space for these objects is obtained by the blow-up of $D_X$ described in Lemma \ref{desing}. Recall that, if $X$ is a blow-up of a curve, we denote by $\widetilde{X}=\ol{X-\cup E}$, for $E$ running over the set of exceptional components.

\begin{Def}
Let $C$ be a stable curve with two smooth components. An \emph{enriched spin curve of $C$ supported on $X$} is given by $(X, L_1, L_2)$, where $X$ is a blow-up of $C$ at a $\underline{\text{proper}}$ subset of nodes and $L_i\in \Pic X$, for $i=1,2$, with $L_i|_E\simeq  \mathcal O_E(1)$ for every exceptional component $E$ and  $$(L_i|_{\widetilde{X}})^{\otimes 2}\simeq \omega_{\widetilde{X}}\otimes T_{C_i}\,,\, (L_1|_{\widetilde{X}})\otimes (L_2|_{\widetilde{X}})\simeq \omega_{\widetilde X}$$ where $T_{C_i}$ is a twister of $\widetilde{X}$ induced by $C_i$ and a general smoothing of $\widetilde{X}$, the same for $i=1,2$.  An isomorphism between $(X, L_1, L_2)$ and $(X', L'_1, L'_2)$ is an isomorphism $\sigma\col X\ra X'$ commuting with the blow-up maps to $C$ and such that  
 $\sigma^*L'_i=L_i$ for $i=1,2$. Denote by $[X, L_1, L_2]$ the isomorphism class of an enriched spin curve, by $\ol{\SE_{C}}$ the set of the isomorphism classes of enriched spin curves of $C$, by $\SE_C$ the subset of the ones supported on $C$.
\end{Def}

For every set of indexes $I$, denote by $X_I$ the blow-up of $C$ at the nodes $\{p_i\}_{i\in I}$ of $C$. For a smooth curve $C$, denote by $J_2(C)$ the group of the two-torsion points of the Jacobian variety of $C$.

\begin{Prop}\label{tors}
Let $C$ be a curve with $\delta$ nodes and two smooth components of genus at least 1. Let $C^\nu$ be the normalization of $C$.
 Then, for every $I\subsetneq\{1,\dots,\delta\}$, the set of the isomorphism classes of enriched spin curves of $C$ supported on $X_I$ and $\SE_{\widetilde{X_I}}$ are isomorphic $J_2(C^\nu)\times (\ze/2\ze)^{\delta-|I|-1}\times(\co^*)^{\delta-|I|-1}$-torsors.
\end{Prop}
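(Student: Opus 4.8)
The plan is to reduce the statement to an explicit description of $\SE_{\widetilde{X_I}}$ and then to exhibit its torsor structure one factor at a time. First I would dispose of the reduction to $\widetilde{X_I}$: writing $Y=\widetilde{X_I}$, a curve with components $C_1,C_2$ meeting at $m:=\delta-|I|$ nodes, the data on the exceptional components of $X_I$ is the fixed bundle $\mathcal O_E(1)$, so by the isomorphism criterion \cite[Lemma 2.1]{C} the isomorphism class of an enriched spin curve of $C$ supported on $X_I$ is determined by the restriction $(L_1|_Y,L_2|_Y)$; since $\A(X_I/C)\cong(\co^*)^{|I|}$ acts trivially on these restrictions, the map $(X_I,L_1,L_2)\mapsto(Y,L_1|_Y,L_2|_Y)$ is a bijection onto $\SE_{\widetilde{X_I}}$, compatible with the torsor structures, so it suffices to analyze $\SE_{\widetilde{X_I}}$. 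As $L_2|_Y=\omega_Y\otimes(L_1|_Y)^{-1}$ is forced, an element of $\SE_{\widetilde{X_I}}$ amounts to a single $M:=L_1|_Y\in\Pic Y$ for which $M^{\otimes2}\otimes\omega_Y^{-1}$ is a twister induced by $C_1$.

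Next I would extract the three factors. Restricting $M$ to the components and using adjunction $\omega_Y|_{C_i}\cong\omega_{C_i}(C_i\cap C_i^c)$, together with the degree prescription of twisters in Proposition \ref{enr-char}(i), one finds that $M|_{C_1}$ is a theta characteristic on $C_1$ while $M|_{C_2}$ is $\mathcal O_{C_2}(C_1\cap C_2)$ twisted by a theta characteristic on $C_2$; this produces a map from $\SE_{\widetilde{X_I}}$ to the set of theta characteristics of $C^\nu$, whose nonempty target is a $J_2(C^\nu)$-torsor, accounting for the factor $J_2(C^\nu)$. Forgetting $M$ and recording only the twister $M^{\otimes2}\otimes\omega_Y^{-1}$ gives a map $\SE_{\widetilde{X_I}}\to\E_Y$, and by Proposition \ref{Maino} applied to $Y$ the target $\E_Y$ is a $(\co^*)^{m-1}$-torsor, accounting for the factor $(\co^*)^{m-1}$. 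With the restrictions and the twister fixed, the remaining freedom in $M$ is the glueing over the $m$ nodes, i.e. the choice of a square root inside a fixed fiber, a torsor under $\Pic^0(Y)[2]$; from the normalization sequence $1\to(\co^*)^{m-1}\to\Pic^0(Y)\to\Pic^0(C_1)\times\Pic^0(C_2)\to1$ and the divisibility of $\co^*$ one obtains $1\to(\ze/2\ze)^{m-1}\to\Pic^0(Y)[2]\to J_2(C^\nu)\to1$, whose kernel $(\ze/2\ze)^{m-1}$ is precisely the sign ambiguity of the square root over the nodes and accounts for the factor $(\ze/2\ze)^{m-1}$.

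To assemble these, I would let $G:=J_2(C^\nu)\times(\ze/2\ze)^{m-1}\times(\co^*)^{m-1}$ act on $\SE_{\widetilde{X_I}}$ by tensoring $M$ with a two-torsion bundle pulled back from $C^\nu$, by flipping the glueing signs over the nodes, and by rescaling the twister; the set is nonempty because theta characteristics, twisters, and the square roots of $\omega_Y\otimes T_{C_i}$ (whose multidegrees $(2g_1-2,\,2g_2-2+2m)$ are even) all exist. The main obstacle is to check that this $G$-action is at once well defined and simply transitive: tensoring $M$ by an element of $K=(\co^*)^{m-1}$ changes the induced twister by its square, so the twister direction $(\co^*)^{m-1}$ and the square-root sign direction $(\ze/2\ze)^{m-1}$ overlap through the squaring map $K\to K$, and one must fix a set-theoretic section of this map to separate them. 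Making this precise — equivalently, showing that the forgetful tower $\SE_{\widetilde{X_I}}\to\E_Y$ whose fibers are the $\Pic^0(Y)[2]$-torsors of square roots is the presentation of a single free transitive $G$-action — is the technical heart of the proof, the multidegree computation guaranteeing existence of square roots on $Y$ being the remaining point that needs care.
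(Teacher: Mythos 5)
Your proposal is essentially the paper's own proof. The paper reduces to $\SE_{\widetilde{X_I}}$ via \cite[Lemma 2.1]{C} exactly as you do, eliminates $L_2$ (it is determined by $L_1$), maps to $\E_{\widetilde{X_I}}$ by recording the twister (a $(\co^*)^{\delta-|I|-1}$-torsor by Proposition \ref{Maino}), and identifies each fibre of that map with the set of square roots of $\omega_{\widetilde{X_I}}\otimes T_{C_1}$, a torsor under $\Pic^0(\widetilde{X_I})[2]\simeq J_2(C^\nu)\times(\ze/2\ze)^{\delta-|I|-1}$. Your nonemptiness check via even multidegrees and your exact-sequence computation of the $2$-torsion are details the paper leaves implicit, and they are correct.

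The only divergence is the step you defer as the ``technical heart'', and it is worth saying plainly: the paper does not do that step either. Its proof stops exactly where yours does, passing from ``the base is a $(\co^*)^{m-1}$-torsor and the fibres are $J_2(C^\nu)\times(\ze/2\ze)^{m-1}$-torsors'' (write $m=\delta-|I|$) directly to the product-torsor conclusion. Moreover, your worry is mathematically genuine. The group of translations $M\mapsto M\otimes P$, $P\in\Pic^0(\widetilde{X_I})$, preserving $\SE_{\widetilde{X_I}}$ is exactly $W=\{P:\ P^{\otimes 2}\in K\}$, where $K\simeq(\co^*)^{m-1}$ is the glueing subgroup; $W$ is an extension of $J_2(C^\nu)$ by the divisible group $K$, hence splits as $W\simeq J_2(C^\nu)\times(\co^*)^{m-1}$, which for $m\ge 2$ is \emph{not} isomorphic to $J_2(C^\nu)\times(\ze/2\ze)^{m-1}\times(\co^*)^{m-1}$ (compare $2$-torsion). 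Equivalently, the squaring sequence $1\to\ze/2\ze\to\co^*\to\co^*\to 1$ admits no homomorphic section, only set-theoretic ones, so the stated product structure can only be imposed after a non-canonical choice --- your set-theoretic section. The statement therefore has to be read the way the paper implicitly reads it (and the same remark applies to the torsor claims of Remark \ref{line-torsor}): as asserting the fibred structure, a twister direction plus a square-root/sign direction, which is what the bijections of Theorems \ref{Th1} and \ref{Th2} actually respect. Measured against the paper's own proof, your argument is complete; the gap you flag is an imprecision of the paper, not a defect peculiar to your approach.
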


\begin{proof}
>From \cite[Lemma 2.1]{C}, a class $[X_I, L_1, L_2]$ is determined by the pair 
$(L_1|_{\widetilde{X_I}}, L_2|_{\widetilde{X_I}}$), hence the set of the isomorphism classes of enriched spin curves of $C$ supported on $X_I$ and $\SE_{\widetilde{X_I}}$ are in bijection. Thus, it suffices to show that $\SE_{\widetilde{X_I}}$ is a  $J_2(C^\nu)\times (\ze/2\ze)^{\delta-|I|-1}\times(\co^*)^{\delta-|I|-1}$-torsor. The set 
$\{(\widetilde{X_I}, \omega_{\widetilde{X_I}}\otimes T_{C_1}, \omega_{\widetilde{X_I}}\otimes T_{C_2})\}$ is in bijection with $\E_{\widetilde{X_I}}$, hence by Proposition \ref{Maino} it is a $(\co^*)^{\delta-|I|-1}$-torsor. By definition,  $L_1|_{\widetilde{X_I}}$ determines $L_2|_{\widetilde{X_I}}$. For $\omega_{\widetilde{X_I}}\otimes T_{C_1}$ fixed, the set of square roots of $\omega_{\widetilde{X_I}}\otimes T_{C_1}$ is a $J_2(C^\nu)\times  (\ze/2\ze)^{\delta-|I|-1}$-torsor, because $C^\nu$ is the normalization of $X_I$.
\end{proof}

>From Proposition \ref{tors}, we get a partition: 
$$\ol{\SE_C}=\cup_{I\subsetneq\{1,\dots,\delta\}}\SE_{\widetilde{X_I}}.$$

\begin{Rem}\label{smooth-bundle}
Let $f\col\mathcal X\ra B$ be a smoothing of a nodal curve $X$ and let $\mathcal N\in\Pic(\mathcal X).$  Let $L\in\Pic(X)$ and let $\iota_0$ be an isomorphism 
$\iota_0\col L^{\otimes 2}\ra\mathcal N\otimes\mathcal O_X.$ By \cite[Remark 3.0.6]{CCC}, up to shrinking $B$ to a complex neighbourhood of $0$, there exists $\mathcal L\in\Pic\mathcal X$ extending $L$ and an isomorphism $\iota\col\mathcal L^{\otimes 2}\ra\mathcal N$ extending $\iota_0.$ Moreover, if $(\mathcal L',i')$ is another extension of $(L,\iota_0),$ then there is an isomorphism $\chi\col\mathcal L\ra\mathcal L',$ restricting to the identity, with $\iota=\iota'\circ\chi^{\otimes 2}.$ 
\end{Rem}

Keep Notation \ref{DX} and the notation of Remark \ref{line-torsor}. 
Let $C$ be a stable curve with two smooth components and $\delta$ nodes with 
$\text{Aut}(C)=\{id\}$. 
Recall that $D_C=\text{Def}(C)\cap \co^\delta_{t_1,\dots,t_\delta}$ 
and $D_X=\phi^{-1}(D_C)$, where $\phi\col\Sgb\ra\Mgb$.
 Let $S_C^{sing}$ be the set of the spin curves of $C$ such that $D_X$ is singular.  
 Recall that $S_C^{sing}$ is described in Lemma \ref{desing}.   
 Now, $S_C^{sing}$ is a $J_2(C^\nu)$-torsor, where $C^\nu$ is the normalization of $C$, then $\cup_{\xi\in S^{sing}_C}(\mb{P}^{\delta-1}_\xi-\cup_{1\le i\le \delta} H_{\xi,i})$ is a $J_2(C^\nu)\times(\ze/2\ze)^{\delta-1}$ $\times(\co^*)^{\delta-1}$-torsor, where $\mb{P}^{\delta-1}_\xi$ and $H_{\xi,i}$ are as in Remark \ref{line-torsor}.

\begin{Thm}\label{Th1}
Let $C$ be a curve with $\delta$ nodes and two smooth components of genus at least 1. Assume that  
$\A(C)=\{id\}$. Let $C^\nu$ be the normalization of $C$.  Then
$\SE_C$ and $\cup_{\xi\in S^{sing}_C}(\mb{P}^{\delta-1}_\xi-\cup_{1\le i\le \delta} H_{\xi,i})$ are isomorphic $J_2(C^\nu)\times(\ze/2\ze)^{\delta-1}$ $\times(\co^*)^{\delta-1}$-torsors. 
\end{Thm}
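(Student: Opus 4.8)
The plan is to produce a single map $\Phi\col\SE_C\ra\cup_{\xi\in S^{sing}_C}(\mb{P}^{\delta-1}_\xi-\cup_{1\le i\le\delta}H_{\xi,i})$ and to verify that it is equivariant for the group $G=J_2(C^\nu)\times(\ze/2\ze)^{\delta-1}\times(\co^*)^{\delta-1}$. Both sides are already known to be $G$-torsors: the left-hand side by Proposition \ref{tors} applied to $I=\emptyset$, and the right-hand side by the discussion preceding the statement, which combines Remark \ref{line-torsor} with the fact that $S^{sing}_C$ is a $J_2(C^\nu)$-torsor. Since any $G$-equivariant map between nonempty $G$-torsors is automatically bijective, the whole content reduces to constructing $\Phi$ and checking equivariance factor by factor; no separate surjectivity or injectivity argument is then needed.

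To build $\Phi$, start with a class $[C,L_1,L_2]\in\SE_C$. The line bundles $T_{C_i}:=L_i^{\otimes 2}\otimes\omega_C^{-1}$ satisfy the conditions of Proposition \ref{enr-char}, so $(C,T_{C_1},T_{C_2})$ is an enriched stable curve and, by Proposition \ref{Maino}, determines a linear direction $R\subset D_C$ through the origin and off the coordinate hyperplanes. Choose a general smoothing $\C\ra B$ realizing $R$, so that its twisters are $T_{C_1},T_{C_2}$. As $L_1^{\otimes 2}\simeq\omega_C\otimes T_{C_1}\simeq(\omega_f\otimes\mathcal O_\C(C_1))|_C$, Remark \ref{smooth-bundle} extends $L_1$ to $\mathcal L_1\in\Pic\C$ with $\mathcal L_1^{\otimes 2}\simeq\omega_f\otimes\mathcal O_\C(C_1)$; since $C_1$ is disjoint from every fiber $C_b$ with $b\ne 0$, one has $\mathcal O_\C(C_1)|_{C_b}\simeq\mathcal O_{C_b}$, so $\mathcal L_1|_{C_b}$ is a theta characteristic. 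This yields a section of $\ol{S}_f(\omega_f)=D_X\times_{D_C}B$ over $B-0$, which extends over $0$ by finiteness. Because $R$ avoids the coordinate hyperplanes every node is smoothed, so the limit is a spin curve $\xi$ supported on the total blow-up, i.e. $\xi\in S^{sing}_C$ by Lemma \ref{desing}; concretely, $\xi$ restricts on $C_1$ and $C_2$ to the theta characteristics given by $L_1|_{C_1}$ and by $L_1|_{C_2}$ twisted down by the node divisor, which identifies the $J_2(C^\nu)$-coordinate of $\xi$ with that of $L_1$. The section lies on one of the $2^{\delta-1}$ lines of $\phi^{-1}(R)$ through $\xi$; after $\nu\col D^\nu_X\ra D_X$ its strict transform meets $\mb{P}^{\delta-1}_\xi$ in a single point, which I declare to be $\Phi[C,L_1,L_2]$, and genericity of $R$ places it off $\cup_iH_{\xi,i}$. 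Well-definedness holds because the enriched structure depends only on $R$ by \cite{M}, and $\mathcal L_1$ is unique up to the isomorphisms of Remark \ref{smooth-bundle}, which fix the section.

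The heart of the proof is equivariance, which I would check in the coordinates of Lemma \ref{desing} and Remark \ref{line-torsor}. Writing $w_{ij}$ for the coordinates on $D_X$ and $\alpha_{is}=w_{1i}/w_{1s}$ on the chart $U_s$, the exceptional point attached to $L_1$ reads $[w_{11}:\cdots:w_{1\delta}]$, and under $\phi$ its squared homogeneous coordinates recover the direction $R$; thus the gluing datum of $L_1$ supplies the point, while its square supplies $R$. Three matchings become transparent. First, the $(\co^*)^{\delta-1}$-action on twisters of Proposition \ref{Maino} corresponds to the torus action on $\mb{P}^{\delta-1}_\xi-\cup_iH_{\xi,i}$, with $\{t_i=0\}$ carried to $H_{\xi,i}$. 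Second, the toric factor $(\ze/2\ze)^{\delta-1}\subset\Pic(C)[2]$, which changes $L_1$ without changing $T_{C_1}$, permutes the $2^{\delta-1}$ square roots of the gluing datum and hence the $2^{\delta-1}$ sheets of $\phi^{-1}(R)$ exactly as the sign changes of Remark \ref{line-torsor}. Third, the complementary factor $J_2(C^\nu)\subset\Pic(C)[2]$, which extends to an honest two-torsion line bundle on $\C$, translates the whole family and so moves $\xi$ within the $J_2(C^\nu)$-torsor $S^{sing}_C$. Matching these three actions shows $\Phi$ is $G$-equivariant, and the torsor principle of the first paragraph concludes.

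I expect the principal obstacle to be the rigorous identification of the two descriptions of $\Pic(C)[2]$: proving that tensoring $L_1$ by a vanishing-cycle (toric) class genuinely realizes the sheet permutation of $\phi^{-1}(R)$ while leaving $\xi$ fixed, and dually that a class pulled back from $C^\nu$ moves $\xi$ but not the sheet. This is precisely where the limit-square-root degeneration must be controlled, and where the techniques of \cite{P} enter. The remaining steps—verifying that the limit lands on the total blow-up and that the squaring relation between the point $[w_{1i}]$ and the direction $R$ intertwines the torus actions—are the local computations already indicated in Lemma \ref{desing}.
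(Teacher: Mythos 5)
Your strategy---construct a single map $\Phi$ and let equivariance of a map of nonempty $G$-torsors do the work of bijectivity---is sound in principle, and your construction of $\Phi$ (the direction $R$ via Proposition \ref{Maino}, the extension $\mathcal L_1$ via Remark \ref{smooth-bundle}, the limit of the resulting section of $\ol{S}_f(\omega_f)$, the strict transform meeting $\mb{P}^{\delta-1}_\xi$) runs parallel to the paper's map $\chi$. But the two steps you gloss over are exactly where the content lies. The first gap is your claim that the limit is a spin curve supported on the total blow-up ``because $R$ avoids the coordinate hyperplanes, [so] every node is smoothed.'' This is a non sequitur: the support of a limit square root is determined by the line bundle family, not by the base direction. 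If your reasoning were valid, it would show that \emph{every} family of theta characteristics along $R$ degenerates to a spin curve supported on the total blow-up; but the fiber of the finite map $\ol{S}_f(\omega_f)\ra B$ over $0$ is all of $\ol{S}_C(\omega_C)$, which for $\delta\ge 2$ contains spin curves supported on proper partial blow-ups, and these too are limits of theta characteristics along the very same smoothing. What actually forces the total blow-up is the odd twist in $\mathcal L_1^{\otimes 2}\simeq\omega_f\otimes\mathcal O_{\C}(C_1)$, the divisor $C_1$ passing through every node; proving this, and identifying the restrictions $G|_{C_1}$, $G|_{C_2}$ of the limit as in (\ref{restriction})---which you need anyway for your $J_2(C^\nu)$-matching---requires a computation after a degree-two base change on a resolved total space. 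That is precisely the paper's diagram $\X\ra\C'\ra\C$ and the bundles $\mathcal G_i=\pi^*\mathcal L_i\otimes\mathcal O_{\X}(C_{3-i})$, for which your proposal offers no substitute.

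The second, larger gap: in your scheme injectivity and surjectivity are entirely subsumed into equivariance, and equivariance is what you never prove. You list three ``transparent matchings'' and then concede that the essential ones---that tensoring $L_1$ by a vanishing-cycle $2$-torsion class permutes the $2^{\delta-1}$ sheets of $\phi^{-1}(R)$ over a fixed $\xi$, while a class from $J_2(C^\nu)$ moves $\xi$ and not the sheet---are ``the principal obstacle.'' That obstacle is the theorem. The paper settles exactly this point not by equivariance but by a counting argument: $|\psi^{-1}(\xi)|\le 2^{\delta-1}$ by \cite[Lemma 4.1.1]{CCC}, while the data (\ref{restriction}) produce $2^{\delta-1}$ pairwise non-isomorphic limit square roots of $\omega_f(C_1)|_C$, represented by $2^{\delta-1}$ distinct smooth points of $\ol{S}_f(\omega_f(C_1))$ all lying in $\psi^{-1}(\xi)$; hence $|\psi^{-1}(\xi)|=2^{\delta-1}$, every point of $\mb{P}^{\delta-1}_\xi-\cup_{1\le i\le\delta}H_{\xi,i}$ represents a pair $(L_1,L_2)$, and injectivity follows from the smoothness of $\ol{S}_f(\omega_f(C_i))$ at $\ell_i$. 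Until you supply an argument of comparable substance for the sheet-permutation and $\xi$-translation claims, your $\Phi$ could, for all the proposal shows, fail to be injective or surjective, and the torsor principle never gets started.
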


\begin{proof}
Let $C_1,C_2$ be the components of $C$. Pick $(C, L_1, L_2)\in \SE_C$.  
By the definition of $\SE_C$, there exists a general smoothing $f\col \C\ra B$ of $C$ such that $L_i^{\otimes 2}\simeq\omega_C\otimes T_{C_i}$ for $i=1,2$, where $T_{C_1}$ (resp. $T_{C_2}$) is the twister induced by $C_1$ (resp. $C_2$) and $\C$.
Let $D_C$ be as in Notation \ref{DX}. 
Let $R\subset D_C$ be the line through the origin, away from the coordinate hyperplanes, such that, up to restrict $B$, the induced map $B\ra\D(C)$ has $R$ as image. By Proposition \ref{Maino}, the line $R$ does not depend on the chosen smoothing $\C\ra B$.

Set $\ol{S}_f(\omega_f):=\Sgb\times_{B}\Mgb$. 
Pick the $B$-curves $\ol{S}_f(\omega_f(C_i))$, for $i=1,2$, as in \cite[Theorem 2.4.1]{CCC}. Recall that the fiber of $\ol{S}_f(\omega_f(C_i))\ra B$ over $0\in B$ represents limit square roots of $\omega_f(C_i)|_C$, for $i=1,2$. Notice that $L_i$ is a limit square roots of $\omega_f(C_i)|_C$ for $i=1,2$.  
Let $\ell_i\in \ol{S}_f(\omega_f(C_i))$ be the point representing $L_i$. Since $\omega_f(C_i)$ and $\omega_f$ are isomorphic away from the special fiber, the curves $\ol{S}_f(\omega_f(C_i))$  and $\ol{S}_f(\omega_f)$ are isomorphic away from the fiber over $0\in B$. This implies that they have the same normalization $S^\nu_f$. 
Call: $$\psi\col S^\nu_f\ra\ol{S}_f(\omega_f)$$ the normalization. By \cite[4.1]{CCC},  $\ol{S}_f(\omega_f(C_i))$  is smooth at $\ell_i$ for $i=1,2$. Therefore $S^\nu_f$ and 
$\ol{S}_f(\omega_f(C_1))$ (resp. $S^\nu_f$ and 
$\ol{S}_f(\omega_f(C_2))$) are isomorphic locally at $\ell_1$ (resp. locally at $\ell_2$). In particular, we can regard $\ell_1$ and $\ell_2$ as points of $S^\nu_f$.

We are able to describe $\psi(\ell_1)$ and $\psi(\ell_2)$. Set $C_1\cap C_2=\{p_1,\dots, p_\delta\}$. By definition, $L_1\simeq L_2\otimes T_{C_1}$. Let $\xi=(X, G)\in S^{sing}_C$ be a spin curve of $C$, where $X$ is the blow-up of $C$ at the whole set of nodes and $G$ is given by the following data, for every exceptional component $E$ of $X$:
\begin{equation}\label{restriction}
G|_{E}\simeq\mathcal O_E(1) \, , \, G|_{C_i}\simeq (L_i)|_{C_i}\simeq (L_{3-i})|_{C_i}(-\sum_{1\le s\le\delta}p_s) \text{ for }i=1,2.
\end{equation}
Take the Cartesian diagram:
\[
\SelectTips{cm}{11}
\begin{xy} <16pt,0pt>:
\xymatrix{
\X\ar[r] & \C' \ar[r] \ar[d] &  \C \ar[d]^f\\
& B' \ar[r]^{g} & B
}
\end{xy}
\]
where $g$ is the degree 2 covering of $B$, totally ramified over $0$, and $\X$ is the blow-up at the nodes of $C$, so that $\X$ is a smoothing of $X$. Call $\pi\col\X\ra \C$ the composed map. Let $\mathcal{L}_1$ (resp. $\mathcal L_2$) be the 
line bundle of $\C$ such that $\mathcal L_1|_C\simeq L_1$ and $\mathcal L_1^{\otimes 2}\simeq \omega_f\otimes T_{C_1}$ (resp. $\mathcal L_2|_C\simeq L_2$ and $\mathcal L_2^{\otimes 2}\simeq \omega_f\otimes T_{C_2}$), as in Remark \ref{smooth-bundle}. Of course, $\mathcal L_1\simeq \mathcal L_2\otimes T_{C_1}$. Set $\mathcal G_i:=\pi^*\mathcal L_i\otimes \mathcal O_{\X}(C_{3-i})$, for $i=1,2$. By construction, for every exceptional component $E\subset X$ we have:
$$\mathcal G_i|_{\widetilde{X}}\simeq G|_{\widetilde{X}}\,,\,\mathcal G_i|_{E}\simeq G|_E\simeq\mathcal O_E(1).$$ This implies that $L_1$ (resp. $L_2$) is isomorphic to a line bundle $G_1$ (resp. $G_2$) in the isomorphism class of $\xi$. Therefore $L_1$ and $G_1$ (resp. $L_2$ and $G_2$) are limits of the same family of theta characteristics, hence $\ell_1,\ell_2\in \psi^{-1}(\xi)$. Since $\mathcal L_1\simeq\mathcal L_2\otimes T_{C_1}$, then also $L_1$ and $L_2$ are limits of the same family of theta characteristics, hence 
$\ell_1=\ell_2\in \psi^{-1}(\xi)\subset S^\nu_f$. 
Let $D_X^\nu\stackrel{\nu}{\ra}D_X\stackrel{\phi}{\ra}D_C$ be as in Remark \ref{line-torsor}. By construction, $\ol{S}_f(\omega_f)$ is given by $\phi^{-1}(R)$, locally at 
$\xi$. In particular, the strict transform $(\nu\circ\phi)^*(R)$ of $R$ is contained in $S^\nu_f$ and $\ell_1=\ell_2 \in\mb{P}^{\delta-1}_{\xi}-\cup_{1\le i\le \delta} H_{\xi,i}$. Define: 
$$\chi\col\SE_C\lra\cup_{\xi\in S^{sing}_C}(\mb{P}^{\delta-1}_\xi-\cup_{1\le i\le \delta} H_{\xi, i})$$ as $\chi(C, L_1, L_2):=\ell_1=\ell_2$.

We show that $\chi$ is surjective. Consider  
$\ell\in \mb{P}^{\delta-1}_\xi-\cup_{1\le i\le \delta} H_{\xi,i}$, where $\xi=(X, G)\in S^{sing}_C$. Let $R\subset D_C$ be the line corresponding to $\ell$. Then 
$\ell\in (\nu\circ\phi)^*(R)$, hence  
$\ell\in S^\nu_f$ and $\psi(l)=\xi.$ By \cite[Lemma 4.1.1]{CCC}, we have $|\psi^{-1}(\xi)|\le 2^{\delta-1}$. 
Being $G$ fixed, the data (\ref{restriction}) determine a set $\mathcal F_1$ (resp. $\mathcal F_2$)  of $2^{\delta-1}$ non-isomorphic line bundles represented by $2^{\delta-1}$ different smooth points of $\ol{S}_f(\omega_f(C_1))$ (resp. $\ol{S}_f(\omega_f(C_2))$). 
Thus $|\psi^{-1}(\xi)|=2^{\delta-1}$ and the subset of  $\ol{S}_f(\omega_f(C_1))$
(resp. $\ol{S}_f(\omega_f(C_2))$) 
representing $\mathcal F_2$ (resp. $\mathcal F_2$) is $\psi^{-1}(\xi)$. In particular, $\ell$ represents two line bundles  $L_1, L_2$ appearing in a enriched spin curve and $\chi(C, L_1, L_2)=\ell$.

We show that $\chi$ is injective. Assume that $\chi(C, L_1, L_2)=\chi(C, L'_1, L'_2).$ In particular, if $\ell_i$ and $\ell'_i$ are the points of $S^\nu_f$ representing $L_i$ and $L'_i$, for $i=1,2$, then $\ell_i=\ell'_i$, which implies that $L_i\simeq L'_i$, for $i=1,2$.
\end{proof}

\begin{Thm}\label{Th2}
Let $C$ be a curve with $\delta$ nodes and two smooth components of genus at least 1. Assume that  
$\A(C)=\{id\}$. Then for every $\emptyset\ne I\subsetneq\{1,\dots,\delta\}$ we have that $\cup_{\xi\in S^{sing}_C}(\cap_{i\in I}H_{\xi,i}-\cup_{i\notin I}H_{\xi,i})$ and $\SE_{\widetilde{X_I}}$ are isomorphic $J_2(C^\nu)\times(\ze/2\ze)^{\delta-|I|-1}\times(\co^*)^{\delta-|I|-1}$-torsors. 
\end{Thm}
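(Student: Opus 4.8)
The plan is to adapt the argument of Theorem \ref{Th1} to the partial blow-up $X_I$, the only genuinely new feature being that the relevant line in $D_C$ now lies \emph{on} the ramification locus of $\phi$. Fix $\emptyset\ne I\subsetneq\{1,\dots,\delta\}$. By Proposition \ref{tors} it suffices to treat an enriched spin curve $(X_I,L_1,L_2)$ supported on $X_I$ itself, since a class $[X_I,L_1,L_2]$ is determined by the pair of restrictions to $\widetilde{X_I}$. As in Theorem \ref{Th1}, the twisters $T_{C_1},T_{C_2}$ of $\widetilde{X_I}$ produced by a general smoothing of $\widetilde{X_I}$ determine, via Proposition \ref{Maino} applied to the $(\delta-|I|)$-nodal curve $\widetilde{X_I}$, a line $R\subset D_C$ through the origin. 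The point is that $R$ is now forced to lie in the coordinate subspace $\cap_{i\in I}\{t_i=0\}$ and to be generic there, i.e. $R\not\subset\{t_j=0\}$ for $j\notin I$: the nodes $p_i$ with $i\in I$ carry an exceptional component and are not smoothed (so $t_i=0$), while the nodes $p_j$ with $j\notin I$ are smoothed by the chosen smoothing of $\widetilde{X_I}$ (so $t_j\ne0$).

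Next I would reproduce the construction of Theorem \ref{Th1} almost verbatim: define $\xi=(X,G)\in S^{sing}_C$ supported on the \emph{full} blow-up of $C$ by $G|_E\simeq\mathcal O_E(1)$ on every exceptional component and $G|_{C_i}\simeq L_i|_{C_i}$ as in (\ref{restriction}); perform the degree-$2$ base change $g\col B'\to B$ and the blow-up $\X$; extend $L_i$ to $\mathcal L_i$ using Remark \ref{smooth-bundle} and set $\mathcal G_i=\pi^*\mathcal L_i\otimes\mathcal O_{\X}(C_{3-i})$. As before one checks $\mathcal G_i|_{\widetilde X}\simeq G|_{\widetilde X}$ and $\mathcal G_i|_E\simeq\mathcal O_E(1)$, so that $L_1$ and $L_2$ are both limits of the same family of theta characteristics degenerating to $\xi$; hence the points $\ell_1,\ell_2\in S^\nu_f$ representing them coincide and lie in $\psi^{-1}(\xi)$. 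Since $\ol S_f(\omega_f)$ is locally $\phi^{-1}(R)$ near $\xi$, the common point $\ell:=\ell_1=\ell_2$ lies on the strict transform $(\nu\circ\phi)^*(R)$, and I would then read off its position on $\mb P^{\delta-1}_\xi$. Working in a chart $U_s$ of Lemma \ref{desing} with pivot $s\notin I$ (which exists since $I\ne\{1,\dots,\delta\}$), one has $\alpha_{is}=t_i/t_s$, so $R$ meets the exceptional divisor at the point with $\alpha_{is}=0$ for $i\in I$ and $\alpha_{is}\ne0$ for $i\notin I$; that is, $\ell\in\cap_{i\in I}H_{\xi,i}-\cup_{i\notin I}H_{\xi,i}$. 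This defines $\chi_I\col\SE_{\widetilde{X_I}}\to\cup_{\xi\in S^{sing}_C}(\cap_{i\in I}H_{\xi,i}-\cup_{i\notin I}H_{\xi,i})$.

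The bijectivity of $\chi_I$ runs parallel to Theorem \ref{Th1}, but with the fibre counts recomputed over the ramification locus. The key point is that, since $R\subset\cap_{i\in I}\{t_i=0\}$, the finite map $\phi$ of degree $2^{\delta-1}$ degenerates: writing $u_i=w_{ii}$ for the coordinates of $D_C$, one has $u_i=0$ on $R$ for $i\in I$, forcing $w_{ij}=0$ whenever $i\in I$, so a branch of $\phi^{-1}(R)$ is determined only by the signs $(\epsilon_j)_{j\notin I}$ modulo the global sign. Hence $\phi^{-1}(R)$ has exactly $2^{\delta-|I|-1}$ branches through $\xi$, and the subgroup of the deck group $(\ze/2\ze)^{\delta-1}$ flipping the signs indexed by $I$ acts trivially, leaving an effective $(\ze/2\ze)^{\delta-|I|-1}$ acting simply transitively on them. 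Matching this with the $2^{\delta-|I|-1}$ non-isomorphic line bundles cut out by the restriction data (\ref{restriction}) on $\widetilde{X_I}$, together with the bound $|\psi^{-1}(\xi)\cap(\cap_{i\in I}H_{\xi,i}-\cup_{i\notin I}H_{\xi,i})|\le 2^{\delta-|I|-1}$ from \cite[Lemma 4.1.1]{CCC}, gives surjectivity; injectivity is immediate from $\ell_i=\ell_i'\Rightarrow L_i\simeq L_i'$ as in Theorem \ref{Th1}. Finally I would check that $\chi_I$ intertwines the group actions, so that it is an isomorphism of $J_2(C^\nu)\times(\ze/2\ze)^{\delta-|I|-1}\times(\co^*)^{\delta-|I|-1}$-torsors, the union over $\xi\in S^{sing}_C$ (a $J_2(C^\nu)$-torsor) supplying the $J_2(C^\nu)$ factor and Remark \ref{line-torsor} supplying the remaining two factors on the target.

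The main obstacle is precisely the behaviour of $\phi$ \emph{along its ramification locus}. Unlike in Theorem \ref{Th1}, the line $R$ passes through the centre $(w_{11},\dots,w_{1\delta})$ of the blow-up $\nu$ as soon as $1\in I$, so neither the number of branches of $\phi^{-1}(R)$ nor the stratum of $\mb P^{\delta-1}_\xi$ they hit can be read off naively. Choosing the pivot chart $U_s$ with $s\notin I$ and tracking the vanishing pattern $w_{ij}=0$ for $i\in I$ is what makes both the count $2^{\delta-|I|-1}$ and the incidence with $\cap_{i\in I}H_{\xi,i}-\cup_{i\notin I}H_{\xi,i}$ transparent. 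This is the combinatorial bookkeeping the introduction warns becomes heavier, and it is where I would spend the most care.
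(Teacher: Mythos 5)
Your proposal breaks down at the step you treat as routine, and the failure is structural, not a matter of bookkeeping. You take $R$ to be a \emph{line inside the coordinate subspace} $\cap_{i\in I}\{t_i=0\}$ and then propose to run the construction of Theorem \ref{Th1} ``almost verbatim'' over it. But a family $f\col\C\ra B$ whose moduli map has image in $\cap_{i\in I}\{t_i=0\}$ is not a smoothing of $C$: the nodes $p_i$, $i\in I$, persist in every fiber, so the general fiber is nodal. Consequently there is no family of theta characteristics on the general fibers, and the phrase ``limits of the same family of theta characteristics'' --- the mechanism by which Theorem \ref{Th1} identifies $\ell_1=\ell_2\in\psi^{-1}(\xi)$ --- has no meaning here. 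Worse, the geometric constructions you invoke are unavailable: locally at a persistent node the total space is $\{xy=0\}\subset\co^3$, a product of the node with the base, so the degree-$2$ base change changes nothing and no blow-up inserts an exceptional $\mb{P}^1$ into the special fiber there (blowing up $(x,y)$ merely separates the branches). Hence you can never produce a family whose special fiber is the full blow-up $X$ supporting $\xi$, nor even $X_I$; and $\mathcal O_{\X}(C_{3-i})$ does not exist as a line bundle, because $C_{3-i}$ fails to be Cartier along the non-normal locus of the total space. The heart of your map $\chi_I$ therefore cannot be constructed as described, and no repair by normalizing the family helps, since the normalized family has genus $g-|I|$ fibers and loses all contact with $\Sgb$ and with $\xi$.

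The idea you are missing is exactly the new ingredient of the paper's proof: a point $\ell\in\cap_{i\in I}H_{\xi,i}-\cup_{i\notin I}H_{\xi,i}$ is reached not by a line in $\cap_{i\in I}\{t_i=0\}$ but by the curve $R\subset D_C$ of (\ref{B-curve}), parametrized by $(t^2,\dots,t^2,t,\alpha_{h+2}t,\dots,\alpha_\delta t)$. This $R$ is a genuine smoothing of $C$ (smooth general fibers), tangent to order two to the coordinate subspace, whose strict transform still passes through $\ell$, and whose total space has $A_1$ singularities $\{xy=t^2\}$ precisely at the nodes indexed by $I$; resolving them yields a family $\X_I\ra B$ with smooth total space and special fiber $X_I$. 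This is how the partial blow-up enters the picture at all. On this family the paper then takes limit square roots of $\omega(C_i)$ supported on \emph{further} blow-ups $Y_i$ of $X_I$ (with $L_i|_{E_j}\simeq\mathcal O_{E_j}$ and $L_i|_{F_{3-i,j}}\simeq\mathcal O_{F_{3-i,j}}(1)$, and gluing data (\ref{restriction2}), not (\ref{restriction}) as you wrote --- the twist involves only the nodes outside $I$), connects them to $\xi$ through the diagram involving $\Z$ and the pushforward (\ref{G}), and finally verifies via Proposition \ref{enr-char} that the restrictions to $\widetilde{X_I}$ give a twister and that $(L_1\otimes L_2)|_{\widetilde{X_I}}\simeq\omega_{\widetilde{X_I}}$. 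None of this survives the substitution of your equisingular line for the paper's tangent smoothing; in particular your branch count $2^{\delta-|I|-1}$, though numerically the same as the paper's, is a count for the wrong curve.
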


\begin{proof}

\emph{First step}.
Without loss of generality, let $I=\{1,\dots, h\}$ and $X_I$ be the blow-up of $C$ at the first $h$ nodes. From now on, $\xi=(X, G)\in S^{sing}_C$ will be a fixed spin curve of $C$, where $X$ is the blow-up of $C$ at the whole set of nodes. Pick $\ell\in \cap_{i\in I}H_{\xi,i}-\cup_{i\notin I}H_{\xi,i}$. Let $D_C$ be as in Notation \ref{DX}. Now, $\ell$ corresponds to a line of $D_C$ with parametrization: 
$$(0, 0, \dots, 0, t_{h+1}, \alpha_{h+2}t_{h+1}, \alpha_{h+3}t_{h+1},\dots, \alpha_\delta t_{h+1}),$$ for some $\alpha_i\in \co^*$. 
Consider the curve $R\subset D_C$ with parametrization:
\begin{equation}\label{B-curve}
(t_{h+1}^2,\dots, t_{h+1}^2, t_{h+1},\alpha_{h+2}t_{h+1}, \alpha_{h+3}t_{h+1},\dots, \alpha_\delta t_{h+1}).
\end{equation} 
Let $f\col\C\ra B$ be a smoothing of $C$ such that, up to restrict $B$, the induced map $B\ra \D(C)$ has $R$ as image.
Notice that $\ell$ is contained in the strict transform $(\nu\circ\phi)^*(R)$ of $R$.  Locally at the first $h$ nodes of $C$, the surface $\C$ is given by $\{xy-t_{h+1}^2=0\}\subset\co^3_{x, y, t_{h+1}}$. Let $\X_I\ra \C$ be the resolution of this singularities. The special fiber of $h\col \X_I\ra B$ is $X_I$ and $\X_I$ is smooth. Pick the $B$-curve $\ol{S}_f(\omega_f)=\ol{S_g}\times_B \Mgb$ and its normalization: $$\psi\col S^\nu_f\ra\ol{S}_f(\omega_f).$$  Now, $\ol{S}_f(\omega_f)$ is given by $\phi^{-1}(R)$, locally at $\xi$, and as in Theorem \ref{Th1}, the strict transform $(\nu\circ\phi)^*(R)$ of $R$ is contained in $S^\nu_f$. 
In particular $\ell\in\psi^{-1}(\xi)$.

\smallskip

\emph{Second Step}. Consider the smoothing $h\col\X_I\ra \C$ of $X_I$. For $i=1,2$, pick the $B$-curves $\ol{S}_{h}(\omega_{h}(C_i))$, as in \cite[Theorem 2.4.1]{CCC}, which are isomorphic to $\ol{S}_f(\omega_f)$ away from the special fiber. The fiber of $\ol{S}_{h}(\omega_{h}(C_i))\ra B$ over $0\in B$ represents limit square roots of $(X_I, \omega_{h}(C_i)|_{X_{I}})$.  In the Second Step, we define points $\ell_i\in\ol{S}_{h}(\omega_{h}(C_i))$ such that $\ell_i\in \psi^{-1}(\xi)$, for $i=1,2$.

Pick the following limit square roots of $(X_I, \omega_{h}(C_i)|_{X_{I}})$.   Let $E_1,\dots, E_h$ be the exceptional components of $X_I$. For $i=1,2$, let $Y_i$ be the blow-up of $X_I$ at the nodes $C_{3-i}\cap E_1,\dots, C_{3-i}\cap E_h$ and call $F_{3-i,1},\dots, F_{3-i,h}$ the new exceptional components, as in Figure 1. Set $\{p_{h+1},\dots, p_\delta\}:=C_1\cap C_2$.
\[
\begin{xy} <16pt,0pt>:
(0,0)*{\scriptstyle}="a"; 
"a"+(0,0);"a"+(-0.5,-1)**\dir{-}; 
"a"+(0,0);"a"+(-0.5,1)**\dir{-}; 
"a"+(0,0);"a"+(2,0)**\crv{"a"+(1,2)}; 
"a"+(0,0);"a"+(2,0)**\crv{"a"+(1,-2)}; 
"a"+(2,0);"a"+(4,1)**\crv{"a"+(3,2)}; 
"a"+(2,0);"a"+(4,-1)**\crv{"a"+(3,-2)};
"a"+(2.5,1.5);"a"+(4.5,-0.5)**\dir{-}; 
"a"+(2.5,-1.5);"a"+(4.5,0.5)**\dir{-};
"a"+(-0.8,1.2)*{\scriptstyle{C_i}}; 
"a"+(-1.3,0)*{Y_i}; 
"a"+(-0.8,-1.2)*{\scriptstyle{C_{3-i}}};  
"a"+(5,0.6)*{\scriptstyle{F_{3-i,1}}}; 
"a"+(4.8,-0.8)*{\scriptstyle{E_1}};  
"a"+(1.7,-2.4)*{\text{Figure 1}};  
\end{xy}
\] 
 Let $(Y_i, L_i)$, for $i=1,2$, be a limit square root of  
$(X_I, \omega_{h}(C_i)|_{X_{I}})$ defined by the conditions  
$L_i|_{E_j}\simeq \mathcal O_{E_j}\,,\, L_i|_{F_{3-i,j}}\simeq\mathcal O_{F_{3-i,j}}(1)$, for $1\le j\le h$, and: 
\begin{equation}\label{restriction2}
L_i|_{C_i}\simeq G|_{C_i}\,,\,L_i|_{C_{3-i}}\simeq G|_{C_{3-i}}(\sum_{h< s\le \delta} p_s).
\end{equation} 
Let $\ell_i$ be the point of $\ol{S}_{h}(\omega_{h}(C_i))$ representing $(Y_i, L_i)$, $i=1,2$. Since $1 \le h<\delta$,  the graph 
$\Sigma_{Y_i}$ has one node and $h$ loops, $\ol{S}_{h}(\omega_{h}(C_i))\ra B$ is \'etale at $\ell_i$, $i=1,2$, by \cite[4.1]{CCC}. Thus $\ol{S}_{h}(\omega_{h}(C_i))$ and $S^\nu_f$ are isomorphic, locally at $\ell_i$ and we will show that $\ell_i\in \psi^{-1}(\xi)$, $i=1,2$. Take the Cartesian diagram: 
\[
\SelectTips{cm}{11}
\begin{xy} <16pt,0pt>:
\xymatrix{
 \Z \ar[d]^{\pi_3}\ar[dr]^{\pi_2} \ar[r]^{\pi_1} &  \Y_1 \ar[r] & \X'_I\ar[r]\ar[d]^{h'} & \X_I \ar[r] \ar[d]_h &  \C \ar[dl]_f\\
\X  &  \Y_2\ar[ur] & B' \ar[r]^{g} & B &  
}
\end{xy}
\]
where $g$ is the degree 2 covering of $B$, totally ramified over $0$, $\Y_i\ra \X'_I$ is the blow-up at the nodes  $C_{3-i}\cap E_1,\dots, C_{3-i}\cap E_h$ for $i=1,2$ and $\Z$ is blow-up at the remaining nodes of $X_I$. We will specify the map $\pi_3$ later.
Notice that $Y_i$ is the special fiber of $\Y_i\ra B'$ for $i=1,2$ and $\Z$ is smooth. Denote by $Z$ the special fiber of $\Z\ra B'$ and let  $F_{i1},\dots,F_{ih}, E_h\dots, E_\delta$ be the exceptional components of $\pi_i$, for $i=1,2$, as in Figure 2. 
\[
\begin{xy} <16pt,0pt>:
(0,0)*{\scriptstyle}="a"; 
"a"+(-3.5,-1.5);"a"+(3.5,-1.5)**\crv{"a"+(0,1)};
"a"+(-3.5,1.5);"a"+(3.5,1.5)**\crv{"a"+(0,-1)};
"a"+(2.5,1.5);"a"+(4,0.2)**\dir{-}; 
"a"+(0,1);"a"+(0,-1)**\dir{-}; 
"a"+(-2,1.3);"a"+(-2,-1.3)**\dir{-}; 
"a"+(2.5,-1.5);"a"+(4,-0.2)**\dir{-};
"a"+(3.6,-1);"a"+(3.6,1)**\dir{-};
"a"+(-3.8,1.2)*{\scriptstyle{C_i}}; 
"a"+(-5.3,0)*{Z}; 
"a"+(-4,-1.2)*{\scriptstyle{C_{3-i}}};  
"a"+(2.7,-2)*{\scriptstyle{F_{3-i,1}}}; 
"a"+(2.6,1.8)*{\scriptstyle{F_{i,1}}}; 
"a"+(0,-1,4)*{\scriptstyle{E_2}}; 
"a"+(-2,1.6)*{\scriptstyle{E_3}}; 
"a"+(3.2,0)*{\scriptstyle{E_1}};  
"a"+(0,-3)*{\text{Figure 2}};  
\end{xy}
\] 
Let $\rho_i\col Y_i\ra X_I$ be the blow-up map and  $\mathcal{L}_i\in\Pic(\Y_i)$ be such that:
\begin{equation}\label{smoothLi}
\mathcal {L}_i|_{Y_i}\simeq L_i\,,\,\mathcal {L}_i^{\otimes 2}\simeq \omega_{\mathcal \Y_i/B'}(-\sum_{1\le j\le h} F_{3-i,j})\otimes \rho_i^*(\mathcal O_{\X_I}(C_i)|_{X_I})
\end{equation} 
as in Remark \ref{smooth-bundle}, for $i=1,2$. The second condition of (\ref{smoothLi}) comes from the very definition of limit square root. Let $\pi_3\col\Z\ra \X$ be the contraction of $F_{ij}$ for $i=1,2$ and $j=1,\dots, h$. In particular, the special fiber of $\X$ is the blow-up $X$ of $C$ at the whole set of its nodes.  For $i=1,2$, define:  
\begin{equation}\label{G}
\mathcal G_i:=(\pi_3)_*(\pi_i^* \mathcal L_i\otimes \mathcal O_{\Z}(C_{3-i}+\sum_{1\le j\le h}F_{3-i,j})).
\end{equation}
By construction, $\pi_i^* \mathcal L_i\otimes \mathcal O_{\Z}(C_{3-i}+\sum_{1\le j\le h}F_{3-i,j})$ has degree $0$ on each $F_{ij}$, hence $\mathcal G_i$ restricts to a line bundle on $X$. Furthermore, $\mathcal G_i|_{\widetilde{X}}\simeq G|_{\widetilde{X}}\,,\,\mathcal G_i|_{E}\simeq\mathcal O_{E}(1)$ for every exceptional component $E\subset X$. As in Theorem \ref{Th1}, we have that $L_i$ and a line bundle in the equivalence class of $\xi=(X, G)$ are limits of the same family of theta characteristics, hence $\ell_i\in\psi^{-1}(\xi)$, for $i=1,2$. 

\smallskip

\emph{Third Step}. In this Step we define an isomorphism:
$$\chi\col\cup_{\xi\in S^{sing}_C}(\cap_{i\in I}H_{\xi,i}-\cup_{i\notin I}H_{\xi,i})\lra\SE_{\widetilde {X_I}}.$$   
Now, $|\psi^{-1}(\xi)|\le 2^{\delta-h-1}$, by \cite[Lemma 4.4.1]{CCC}, and (\ref{restriction2}) define $2^{\delta-h-1}$ different limit square roots $(Y_1, L_1)$ (resp. 
$(Y_2, L_2)$) of $(X_I,\omega_h(C_1))$ (resp. $(X_I,\omega_h(C_2))$).  These limit square roots are represented by points of $\psi^{-1}(\xi)$. Hence  $|\psi^{-1}(\xi)|=2^{\delta-h-1}$ and each $l\in\psi^{-1}(\xi)$ represents a limit square root $(Y_1, L_1)$  of $(X_I,\omega_h(C_1))$ and a limit square root $(Y_2, L_2)$ of $(X_I,\omega_h(C_2))$. 
Define $\chi(\ell)=[\widetilde{X_I}, L_1|_{\widetilde{X_I}}, L_2|_{\widetilde{X_I}}].$ First of all, we show that $\chi(\ell)\in \SE_{\widetilde{X_I}}$. Set $q_{3-i,j}:=C_{3-i}\cap F_{3-i,j}\in C_{3-i}$ for $i=1,2$ and $1\le j\le h$. The definition of limit square root implies: 
$$(L_i|_{\widetilde{X_I}})^{\otimes 2}\simeq \omega_h(C_i)|_{\widetilde{X_I}}(-\sum_{1\le j\le h} q_{3-i, j}) \simeq\omega_{\widetilde{X_I}}\otimes \mathcal O_{\X_I}(C_i)|_{\widetilde{X_I}}(\sum_{1\le j\le h} q_{ij}),$$

for $i=1,2$. 
Set $M_i=\mathcal O_{\X_I}(C_i)|_{\widetilde{X_I}}(\sum_{1\le j\le h} q_{ij})$, for $i=1,2$. We have: 
$$M_1\otimes M_2\simeq \mathcal O_{\X_I}(C_1+C_2)|_{\widetilde{X_I}}(\sum_{1\le j\le h} (q_{ij}+q_{3-i,j}))\simeq\mathcal O_{\widetilde{X_I}}$$
$$M_i\otimes \mathcal O_{C_j}\simeq
\begin{array}{ll}
\begin{cases}
\mathcal O_{C_j}(-\sum_{h<s\le \delta} p_s) & i=j
\\
\mathcal O_{C_j}(\sum_{h<s\le\delta} p_s) & i\ne j
\end{cases}
\end{array}$$ 

for $i=1,2$. 
By Proposition \ref{enr-char}, $M_i$ is a twister $T_{C_i}$ of $\widetilde{X_I}$ induced by $C_i$ and a general smoothing of $\widetilde{X_I}$, the same for $i=1,2$. To prove the second condition of an enriched spin curve, take the families $\Y_1$ and $\Y_2$, which are the same family away from the special fibers. Let $\theta_i\col\Y\ra \Y_i$ be the blow-up of $\Y_i$ at $C_i\cap E_1,\dots, C_i\cap E_h$, for $i=1,2$, and call $Y$ its special fiber. Since $(Y_1, L_1)$ and $(Y_2, L_2)$ are represented by the same point of $S^\nu_f$, up to change $L_2$ in the isomorphism class of $(Y_2, L_2)$, we have that $L_1$ and $L_2$ are limits of the same family of theta characteristics, given by the line bundles $\mathcal L_i$ of (\ref{smoothLi}). 
Set $\mathcal N:=(\theta_1^*\mathcal L_1)\otimes (\theta_2^*\mathcal L_2)$ and $\mathcal N^*:=\mathcal N^*|_{\Y-Y}$.
Thus, $\mathcal N^*\simeq\omega_{\Y-Y/B'-0}$, hence $\mathcal N\simeq \omega_{\Y/B'}\otimes\mathcal O_{\Y}(D)$, where $D$ is a Cartier divisor supported on components of $Y$. By (\ref{restriction2}), 
$\mathcal N|_{C_i}\simeq\omega_{\Y/B'}\otimes\mathcal O_{C_i}(-\sum_{1\le j\le h}q_{ij})$, thus: 
$$(L_1\otimes L_2)|_{\widetilde{X_I}}\simeq\mathcal N|_{\widetilde{X_I}}\simeq\omega_{\Y/B'}\otimes \mathcal O_{\widetilde{X_I}}(-\sum_{1\le j\le h}(q_{ij}+q_{3-i, j}))\simeq \omega_{\widetilde{X_I}}.$$
Then, $[\widetilde{X_I}, L_1|_{\widetilde{X_I}}, L_2|_{\widetilde{X_I}}]\in\SE_{\widetilde {X_I}}$.

We conclude by showing that $\chi$ is a bijection. 
The injectivity of $\chi$ is trivial. In fact, if we give $(Y_i, L_i)$ and $(Y_i, L'_i)$ such that $L_i|_{\widetilde{X_I}}\simeq L'_i|_{\widetilde{X_I}}$, for $i=1,2$, then  
$(Y_i, L_i)$ and $(Y_i, L'_i)$ define the same limit square root, for $i=1,2$.   To show that $\chi$ is surjective, we show that the image of $\chi$ has the right cardinality. Indeed, an element of the image is determined by choosing $\xi$ in the set $S^{sing}_C$, which is a $J_2(C^\nu)$-torsor, by choosing $R\subset D_C$ in the set of curves with parametrization as in (\ref{B-curve}), which is a $(\co^*)^{\delta-h-1}$-torsor and $l$ in the set $\psi^{-1}(\xi)$, which is a $(\ze/2\ze)^{\delta-h-1}$-torsor. 
\end{proof}

\begin{Exa}\label{Example}
Consider  a stable curve $C$ with two smooth components $C_1,C_2$ and three nodes. Set $C_1\cap C_2=\{p_1,p_2,p_3\}$. Assume that $\text{Aut}(C)=\{id\}$. 
 For every  spin curve $\xi$ of $C$, let $\mb{P}^2_\xi, H_{\xi, 1}, H_{\xi, 2}, H_{\xi, 3}$ be as in Remark \ref{line-torsor}. Let
 $X_i$ be the blow-up of $C$ at $p_i$ with exceptional component $E_i$, for $i=1,2,3$ and let $X_{ij}$ the blow-up at $\{p_i,p_j\}$, with exceptional components $E_i, E_j$ for every $\{i,j\}\subset\{1,2,3\}$. 
  Let  $S_C^{sing}$ be the set of spin curves of Theorem \ref{Th1} and Theorem \ref{Th2}.  The set $\ol{\SE_C}$ of enriched spin curves of $C$ is stratified as shown in Figure 3, where $\xi$ runs over the set $S_C^{sing}$. 
\[
\begin{xy} <16pt,0pt>:
(0,0)*{\scriptstyle}="a"; 
(0.7,-3.3)*{\scriptstyle}="b"; 
(0.7,-7)*{\scriptstyle}="c"; 
(0.7,0)*{\scriptstyle}="d"; 
(0.7,-3.3)*{\scriptstyle}="g"; 
(3.5,-10.5)*{\scriptstyle\bullet}="b";
(0,-3.5)*{\scriptstyle}="f";
"f"+(3,-3.3);"f"+(3.4,-3.3)**\dir{-};
"f"+(3.5,-3.3);"f"+(4.5,-3.3)**\dir{-};
"f"+(4.6,-3.3);"f"+(5,-3.3)**\dir{-};
"f"+(3,-3.3);"f"+(3.4,-3.3)**\dir{-};
"f"+(3.5,-3.3);"f"+(4.5,-3.3)**\dir{-};
"f"+(4.6,-3.3);"f"+(5,-3.3)**\dir{-};
"f"+(3,-3.3);"f"+(3.4,-3.3)**\dir{-};
"f"+(3.5,-3.3);"f"+(4.5,-3.3)**\dir{-};
"f"+(4.6,-3.3);"f"+(5,-3.3)**\dir{-};
"f"+(3.2,-3.7);"f"+(4.2,-2.2)**\dir{--};
"f"+(4.9,-3.7);"f"+(3.8,-2.2)**\dir{--};
"a"+(3,-3.3);"a"+(5,-3.3)**\dir{--};
"a"+(3.2,-3.7);"a"+(4.2,-2.2)**\dir{--};
"a"+(4.9,-3.7);"a"+(3.8,-2.2)**\dir{--};
"a"+(2.5,-2);"a"+(5.5,-2)**\dir{-};
"a"+(2.5,-4);"a"+(5.5,-4)**\dir{-};
"a"+(5.5,-2);"a"+(5.5,-4)**\dir{-};
"a"+(2.5,-4);"a"+(2.5,-2)**\dir{-};
"a"+(2.5,-2);"a"+(5.5,-2)**\dir{-};
"a"+(2.5,-4);"a"+(5.5,-4)**\dir{-};
"a"+(5.5,-2);"a"+(5.5,-4)**\dir{-};
"a"+(2.5,-4);"a"+(2.5,-2)**\dir{-};
"a"+(-10,-12);"a"+(7,-12)**\dir{-};
"a"+(-10,0);"a"+(7,0)**\dir{-};
"a"+(-10,0);"a"+(-10,-12)**\dir{-};
"a"+(7,0);"a"+(7,-12)**\dir{-};
"a"+(-10,-1.7);"a"+(7,-1.8)**\dir{-};
"d"+(-6,-3);"d"+(-6.5,-4)**\dir{-}; 
"d"+(-6,-3);"d"+(-6.5,-2)**\dir{-}; 
"d"+(-2,-3);"d"+(-1.5,-4)**\dir{-}; 
"d"+(-2,-3);"d"+(-1.5,-2)**\dir{-}; 
"d"+(-6,-3);"d"+(-4,-3)**\crv{"d"+(-5,-1)}; 
"d"+(-6,-3);"d"+(-4,-3)**\crv{"d"+(-5,-5)}; 
"d"+(-4,-3);"d"+(-2,-3)**\crv{"d"+(-3,-1)}; 
"d"+(-4,-3);"d"+(-2,-3)**\crv{"d"+(-3,-5)}; 
"g"+(-6,-3);"g"+(-6.5,-4)**\dir{-}; 
"g"+(-6,-3);"g"+(-6.5,-2)**\dir{-}; 
"g"+(-6,-3);"g"+(-4,-3)**\crv{"g"+(-5,-1)}; 
"g"+(-6,-3);"g"+(-4,-3)**\crv{"g"+(-5,-5)}; 
"g"+(-4,-3);"g"+(-2,-2.5)**\crv{"g"+(-3,-1)}; 
"g"+(-4,-3);"g"+(-2,-3.5)**\crv{"g"+(-3,-5)};
"c"+(-2.9,-0.8);"c"+(-2.9,2.2)**\dir{-}; 
"c"+(-2.9,-1.5);"c"+(-2.9,-4.5)**\dir{-}; 
"c"+(-5.1,-1.5);"c"+(-5.1,-4.5)**\dir{-}; 
"c"+(-6,-2.5);"c"+(-4,-3)**\crv{"c"+(-5,-1)}; 
"c"+(-6,-3.5);"c"+(-4,-3)**\crv{"c"+(-5,-5)}; 
"c"+(-4,-3);"c"+(-2,-2.5)**\crv{"c"+(-3,-1)}; 
"c"+(-4,-3);"c"+(-2,-3.5)**\crv{"c"+(-3,-5)};
"a"+(-8.5,-3)*{\SE_C}; 
"a"+(-0.5,-4.3)*{\scriptstyle{C_2}}; 
"a"+(-6,-4.3)*{\scriptstyle{C_1}}; 
"a"+(-1,-6)*{\scriptstyle{C_2}}; 
"a"+(-6,-7.5)*{\scriptstyle{C_1}}; 
"a"+(-1,-9.7)*{\scriptstyle{C_2}}; 
"a"+(-5.5,-9.7)*{\scriptstyle{C_1}}; 
"a"+(-4.1,-10)*{\scriptstyle{E_i}};  
"a"+(-1.9,-10)*{\scriptstyle{E_j}};  
"a"+(-1.9,-6.3)*{\scriptstyle{E_i}}; 
"a"+(-8.5,-6.3)*{\SE_{\widetilde{X_i}}}; 
"a"+(-8.5,-10)*{\SE_{\widetilde{X_{ij}}}}; 
"a"+(3.3,-8)*{\cup_\xi(\scriptstyle{H_{\xi, i}-(H_{\xi,j}\cup H_{\xi,k}))}}; 
"a"+(4,-11.4)*{\cup_\xi(\scriptstyle{H_{\xi, i}\cap H_{\xi, j}})}; 
"a"+(3.7,-4.5)*{\cup_\xi(\scriptstyle{\mb{P}^2_{\xi}-(H_{\xi, 1}\cup H_{\xi, 2}\cup H_{\xi, 3})})}; 
"a"+(-8.5,-0.5)*{\text{Strata}}; 
"a"+(-8.5,-1.3)*{\text{of } \ol{\SE_C}}; 
"a"+(-3,-0.5)*{\text{Support of}}; 
"a"+(-3,-1.3)*{\text{enriched spin curves}}; 
"a"+(4,-0.5)*{\text{Strata}}; 
"a"+(4,-1.3)*{\text{of }\cup_\xi\mb{P}^2_{\xi}}; 
"a"+(-1.5,-12.5)*{\text{Figure 3}}; 
\end{xy}
\] 
\end{Exa}

\bigskip
\bigskip
\bigskip
Marco Pacini
\\
Universidade Federal Fluminense (UFF)
\\
Rua M\'ario Santos Braga S/N
\\
Niter\'oi-Rio de Janeiro-Brazil 
\\
email: pacini@impa.br

\end{document}